\title{An Infinite Family of Artin-Schreier Curves with Minimal a-number}
\author{Iris Shi}
\theoremstyle{plain}
\newtheorem{theorem}{Theorem}[section]
\newtheorem{lemma}[theorem]{Lemma}
\newtheorem{proposition}[theorem]{Proposition}
\theoremstyle{definition}
\newtheorem{example}[theorem]{Example}
\newtheorem{remark}[theorem]{Remark}
\newtheorem{notation}[theorem]{Notation}
\newtheorem{definition}[theorem]{Definition}
\newcommand{\Zp}[0]{\mathbb Z/p \mathbb Z}
\newcommand{\spec}[1]{\text{Spec}(#1)}
\DeclareFontFamily{OT1}{rsfs}{}
\DeclareFontShape{OT1}{rsfs}{n}{it}{<-> rsfs10}{}
\DeclareMathAlphabet{\mathscr}{OT1}{rsfs}{n}{it}
\definecolor{dkgreen}{rgb}{0,0.6,0}
\definecolor{gray}{rgb}{0.5,0.5,0.5}
\definecolor{mauve}{rgb}{0.58,0,0.82}
\tiny\color{gray},
\begin{document}

\begin{abstract}
    Let $p$ be an odd prime and $k$ be an algebraically closed field with characteristic $p$. Booher and Cais showed that the $a$-number of a $\mathbb Z/p \mathbb Z$-Galois cover of curves $\phi: Y \to X$ must be greater than a lower bound determined by the ramification of $\phi$. In this paper, we provide evidence that the lower bound is optimal by finding examples of Artin-Schreier curves that have $a$-number equal to its lower bound for all $p$. Furthermore we use formal patching to generate infinite families of Artin-Schreier curves with $a$-number equal to the lower bound in any characteristic.
\end{abstract}

\maketitle

\section{Introduction}
Let $p$ be an odd prime and $k$ be an algebraically closed field with characteristic $p$. Let $\phi: Y \to X$ be a smooth, projective, and connected cover of curves over $k$ with Galois group $G$. 
Some broad questions are
\begin{itemize}
    \item ``What properties of the curve $Y$ can be determined solely from properties of the curve $X$ and the map $\phi$?"
    \item ``What information is needed to determine the other properties of $Y$?" 
\end{itemize}
A classic version of this question concerns the genus of the curves, a standard numerical invariant associated to a curve. The genera of $X$ and $Y$ can be described as the $k$-dimension of $H^0(X,\Omega_X^1)$ and $H^0(Y,\Omega_Y^1)$, the space of regular $1$-forms on $X$ and $Y$, respectively. The well known Riemann-Hurwitz formula explains how the genus of $Y$ can be determined entirely from $X$ and ramification information about the cover $\phi$, 
\begin{equation*}
    2g_Y-2=|G|(2g_X-2)+\sum_{y \in \phi^{-1}(S)}\sum_{i \geq 0}(|G_i(y)|-1),
\end{equation*}
where $S$ is the branch locus of $\phi$ and $G_i(y)$ is the $i$th ramification group in lower numbering at $y$.

When $k$ has characteristic $p$, as in this paper, there are additional invariants arising from the Frobenius automorphism. We will work with the Cartier operator (which is dual to the Frobenius on $H^1(X,\mathcal O_X)$ via Serre duality). 
For the curve $X$, the Cartier operator is a $p^{-1}$-semilinear map $\mathcal C_X: H^0(X,\Omega_X^1) \to H^0(X,\Omega_X^1)$.
As $H^0(X,\Omega_X^1)$ is a finitely generated $k[\mathcal C_X]$-module, the structure theorem for finitely generated modules over a P.I.D. gives the following decomposition of $k[\mathcal C_X]$-modules,
\begin{equation}\label{kCX decomp}
    H^0(X,\Omega_X^1) = \bigoplus_i k[\mathcal C_X]/\mathcal C_X^{n_i} \oplus \bigoplus_j k[\mathcal C_X]/f_j(\mathcal C_X)^{n_j},
\end{equation}
where $f_j(\mathcal C_X)$ are irreducible polynomials in $k[\mathcal C_X]$ not equal to $\mathcal C_X$. (Although $k[\mathcal C_X]$ is not technically a P.I.D. as its non-commutative, there exists an identical structure theorem for non-commutative P.I.D. \cite[Theorem 3.19]{theoryofrings}.) Note that $\mathcal C_X$ acts nilpotently on the first part of \eqref{kCX decomp}. The $p$-rank of $X$, which we denote by $s_X$, is then defined as the $k$-dimension of $\bigoplus_j k[\mathcal C_X]/f_j(\mathcal C_X)^{n_j}$, the second half of the decomposition in \eqref{kCX decomp}. Like the genus, the $p$-rank is another invariant of $Y$ that can often be determined from $X$ and ramification information from $\pi$. The Deuring-Shafarevich formula says that when $G=\Zp$,
\begin{equation*}
    s_Y-1=p(s_X-1)+\sum_{y\in \phi^{-1}(S)}(d_y-1),
\end{equation*}
where $S$ is the branch locus of $\phi$ and $d_y$ is the unique break in the ramification filtration at $y$ \cite{subrao}. We refer to $d_y$ as the \textit{ramification break} at $y$.

The $a$-number is an additional numerical invariant describing the structure of $H^0(X,\Omega_X^1)$, defined as the number of summands in $\bigoplus_i k[\mathcal C_X]/\mathcal C_X^{n_i}$, the first half of \eqref{kCX decomp}, i.e. the $k$-dimension of $\ker(\mathcal C_X)$. This invariant is less understood and the focus of this paper. Since the $a$-number is similar to the $p$-rank in definition and simplicity, it would be natural to attempt to find an analog of the Deuring-Sharfarevich formula. 
However, no such formula exists, and we can use Artin-Schreier curves to see this. Artin-Schreier curves are smooth, projective, connected covers of $\mathbb P^1$ with Galois group $\Zp$ and are a key case of $p$-group curves. Any Artin-Schreier curve can be defined by an equation of the form $y^p-y=f$ where $f\in k(x)$ is nonconstant and $k(x)$ is the function field corresponding to $\mathbb P^1$. The ramifcation break at $y$, $d_y$, is the order of the pole of $f$ at a branch point $y$. The Artin-Schreier curve defined by $y^3-y=x^7$ has $a$-number $4$ while the Artin-Schreier curve defined by $y^3-y=x^7+x^5$ has $a$-number $3$, despite both being covers of $\mathbb P^1$ and being branched only over $\infty$ with ramification break $d_\infty=7$. This shows that the $a$-number of $Y$ cannot be determined using the same information needed to determine its genus and $p$-rank. 

However $X$ and the ramification of $\phi$ still constrain the $a$-number of $Y$.  Farnell and Pries \cite{farnellpries} discovered a formula for the $a$-number of an Artin-Schreier curve dependent only on $X$ and its ramification information for a specific congruence condition on the order of the poles of its defining equation. 
Elkin and Pries \cite{elkinpries} found a specific formula for the $a$-number of hyperelliptic $k$-curves in characteristic 2 dependent only on its ramification information. These are specific cases where the $a$-number of $Y$ can be determined given some condition on the characteristic or the ramification. 
In general, Booher and Cais \cite{boohercais} were able to find upper and lower bounds for the $a$-number of a curve $Y$, where $\phi: Y\to X$ is a branched $\Zp$-cover, depending only on $X$ and the ramification of $\phi$,
\begin{equation} \label{initiallb}
    \max_{1\leq j \leq p-1}\left(
    \sum_{Q\in S}\sum_{i=j}^{p-1}\left(\left\lfloor\frac{id_Q}{p}\right\rfloor -\left\lfloor\frac{id_Q}{p}-\left(1-\frac{1}{p}\right)\frac{jd_Q}{p}\right\rfloor\right)\right)
    \leq a_Y,
\end{equation}
\begin{equation}\label{initiallb2}
    a_Y 
    \leq pa_X + \sum_{Q\in S}\sum_{i=1}^{p-1}\left(\left\lfloor\frac{id_Q}{p}\right\rfloor - (p-i)\left\lfloor\frac{id_Q}{p^2}\right\rfloor\right),
\end{equation}
where $S$ is the branch locus and $d_Q$ is the ramification break at $Q$ in lower numbering. If the branch locus contains only a single point $Q$, we refer to the ramification break at $Q$ as $d$ and to the lower bound in the left side of \eqref{initiallb} as $L(d)$.

In some sense, the bounds in \eqref{initiallb} and \eqref{initiallb2} are an analog to the Deuring-Shaferavich for the $a$-number. However, because they are not an exact formula, work needs to be done to show that they are the optimal bounds. 
Experimentally, the bounds seem to be optimal. A randomly generated curve over $\mathbb F_p$ with a fixed ramification break, $d$, has a high chance of having an $a$-number equal to the lower bound. The code to check this can be found at \cite{github} or attached to the arXiv submission. Some experiments are also tabled in \cite[Remark 1.5]{promys}.
The authors of \cite{promys} also provide harder evidence that the lower bound is optimal for small $p$, as they exhibit curves over $\mathbb F_3$ and $\mathbb F_5$ with any size ramification break that attains an $a$-number equal to the lower bound in \eqref{initiallb}. 
In this paper, we provide additional cases in which the bound in \eqref{initiallb} is optimal.  

\newtheorem*{theorem:family}{Theorem \ref{family}}
\begin{theorem:family}
    For any prime $p$ and any positive $d \equiv -1 \pmod {p^2}$ there exists a $\mathbb Z/p\mathbb Z$-Galois cover $X \to \mathbb P^1$ branched at one point with ramification break $d$ and $a$-number equal to $L(d)$.
\end{theorem:family}

\newtheorem*{theorem:family2}{Theorem \ref{farnell family}}
\begin{theorem:family2}
    For any prime $p$ and any positive $d \equiv p - 1 \pmod {p^2}$ there exists a $\Zp$-Galois cover $X \to \mathbb P^1$ branched at one point with ramification break $d$ and $a$-number equal to $L(d)$.
\end{theorem:family2}

\newtheorem*{theorem:family3}{Theorem \ref{code examples}}
\begin{theorem:family3}
    For any prime $p \leq 23$ and any positive $d \equiv -1 \pmod p$, there exists a $\Zp$-Galois cover $X \to \mathbb P^1$ branched at one point with ramification break $d$ and $a$-number equal to $L(d)$.
\end{theorem:family3}

In Theorem \ref{family} and \ref{farnell family}, we generate families of curves with arbitrarily large ramification break in any finite characteristic that have $a$-number equal to the lower bound in \eqref{initiallb}. We do this by exhibiting small curves with minimal $a$-number and then using formal patching to build an infinite family of curves by combining the smaller curves together inductively. Note that our method generates curves of arbitrarily large ramification breaks for any prime $p$, as opposed to just $p=3$ and $p=5$. This serves as evidence that the bounds are optimal. 

\begin{remark}
    The method used in Section \ref{infinite family} can only produce curves with ramification break $d \equiv -1 \pmod{p}$. For generic $p$, we only tackle $d \equiv \pm 1 \pmod{p^2}$ in this paper due to the difficulty in finding curves that have $a$-number equal to the lower bound for all $p$ that are easy to analyze. We produce curves for all of the (mod $p^2$) congruence classes for small $p$ in Theorem \ref{code examples} by using the MAGMA computational algebra system \cite{MAGMA} to find examples.
\end{remark}

\begin{remark}
    In Section \ref{infinite family}, we form covers with $a$-number equal to the lower bound with a single branch point by combining curves with the same branch point. However, we could also construct covers with multiple branch points having minimal $a$-number. To do so, combine the examples with a single branch point that we give in this paper using the formal patching arguments in \cite{booherpries,promys}. 
\end{remark}

\subsection*{Acknowledgements}
    The initial idea for this paper came from Rachel Pries, who I would like to thank for both the inspiration and her helpful review. I would also like to thank my advisor Jeremy Booher for his support and indispensable guidance.

\section{Small Artin-Schreier Curves}

\subsection{Numerical invariants of Artin-Schreier curves}
Fix an odd prime $p$. Let $k$ be an algebraically closed field with characteristic $p$. An Artin-Schreier curve is a finite morphism of smooth, projective, connected curves $X \to \mathbb P^1$ with Galois group $\Zp$. Any Artin-Schreier curve can be defined by an equation of the form $y^p-y = f(x)$ where $f(x) \in k(x)$ is nonconstant, and $k(x)$ is the function field of $\mathbb P^1$. There are several numerical invariants associated to an Artin-Schreier curve, namely the genus, $p$-rank, and $a$-number. These invariants can be viewed as properties of the Cartier operator and the space of regular $1$-forms, $H^0(X,\Omega_X^1)$.\\

The Cartier operator, $\mathcal C_X: H^0(X,\Omega_X^1) \to H^0(X,\Omega_X^1)$, is a semi-linear operator  with the following properties. For rational functions $f, x$ and differentials $\alpha,\beta$:
\begin{align}\label{cartier properties}
\mathcal C_X(f^p\alpha+\beta) &=f \mathcal C_X(\alpha) +\mathcal C_X(\beta),\nonumber \\
\mathcal C_X(x^{p-1}dx) &= dx,\\
\mathcal C_X(x^ndx) &= 0 \text{ if } n \not\equiv -1 \pmod p.\nonumber
\end{align}

The genus, $g$, of $X$ is equal to the dimension of $H^0(X,\Omega_X^1)$. The $p$-rank is equal to the dimension of the image of $\mathcal C_X^g$. The $a$-number, written as $a(X)$, is equal to the dimension of the kernel of the Cartier operator.

The $a$-number for an Artin-Schreier curve with branch locus $S$ and $D=\{d_Q\}_{Q\in S}$, the set of ramification breaks over $S$, has the lower bound $L(D)$ \cite{boohercais},
\begin{equation*}
    L(D) = \max_{1\leq j \leq p-1}\left(
    \sum_{Q\in S}\sum_{i=j}^{p-1}\left(\left\lfloor\frac{id_Q}{p}\right\rfloor -\left\lfloor\frac{id_Q}{p}-\left(1-\frac{1}{p}\right)\frac{jd_Q}{p}\right\rfloor\right)\right).
\end{equation*}
We will only examine curves branched at one point. So in this case $D=\{d\}$ is a singleton set and we denote the lower bound as $L(d)$. We will use the following simplified formula for $L(d)$ in calculations.

\begin{lemma}\label{simplified formula}
Let $\pi: Y \to X$ be a finite morphism of smooth, projective, and geometrically connected curves over a perfect field with odd characteristic $p$ and Galois with group $\Zp$ branched at a single point. Let $d \in \mathbb N$ be the ramification break over that point. Then,
\begin{equation*}
    L(d)=\sum_{i=\frac{p+1}{2}}^{p-1}\left(\left\lfloor\frac{id}{p}\right\rfloor -\left\lfloor\frac{id}{p}-\left(1-\frac{1}{p}\right)\frac{(p+1)d}{2p}\right\rfloor\right).
\end{equation*}
\end{lemma}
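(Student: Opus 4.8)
Both sides of the asserted identity are purely arithmetic expressions in the integer $d$ and the odd prime $p$, so the cover $\pi$ plays no role and we argue directly. The plan is to observe that the right-hand side is exactly the $j=\tfrac{p+1}{2}$ term of the maximum defining $L(d)$: when $j=\tfrac{p+1}{2}$ the inner sum $\sum_{i=j}^{p-1}$ already begins at $i=\tfrac{p+1}{2}$, and $\left(1-\tfrac1p\right)\tfrac{jd}{p}=\left(1-\tfrac1p\right)\tfrac{(p+1)d}{2p}$. So, writing
\[
S(j):=\sum_{i=j}^{p-1}\left(\left\lfloor\frac{id}{p}\right\rfloor-\left\lfloor\frac{id}{p}-\frac{(p-1)jd}{p^2}\right\rfloor\right),
\]
it suffices to show $S(j)\le S\!\left(\tfrac{p+1}{2}\right)$ for every $1\le j\le p-1$; we do this by proving that $S$ is non-decreasing in $j$ up to $j=\tfrac{p+1}{2}$ and non-increasing afterwards.

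A first reduction disposes of large $d$. Writing $d=p^2t+r$ with $t=\lfloor d/p^2\rfloor$ and $0\le r<p^2$, one checks directly, by peeling the multiple of $p^2$ off each floor, that
\[
S(j)=(p-1)t\cdot j(p-j)+S_r(j),
\]
where $S_r$ denotes the same expression with $d$ replaced by $r$. The function $j\mapsto j(p-j)$ on $\{1,\dots,p-1\}$ attains its maximum $\tfrac{p^2-1}{4}$ exactly at $j=\tfrac{p-1}{2}$ and $j=\tfrac{p+1}{2}$; so once we know that $\max_j S_r(j)=S_r(\tfrac{p+1}{2})$ (hence also $S_r(\tfrac{p-1}{2})\le S_r(\tfrac{p+1}{2})$), the desired inequality for $d$ follows from the corresponding one for $r<p^2$. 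It therefore remains to treat $d<p^2$.

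For $d<p^2$ we analyze the first difference $\Delta(j):=S(j+1)-S(j)$. Using the identity $\lfloor x\rfloor-\lfloor x-c\rfloor=\#\bigl(\mathbb Z\cap(x-c,x]\bigr)$ for $c\ge 0$, the rewriting $\tfrac{id}{p}-\tfrac{(p-1)jd}{p^2}=\tfrac{(i-j)d}{p}+\tfrac{jd}{p^2}$, and the substitution $\ell=i-j$, both $S(j)$ and $S(j+1)$ become differences of staircase sums $\sum_\ell\bigl\lfloor\tfrac{\ell d}{p}+\theta\bigr\rfloor$ with nearby offsets $\theta$; telescoping these against one another collapses $\Delta(j)$ to
\[
\Delta(j)=\left\lfloor d-\frac{d}{p}-\frac{(p-1)jd}{p^2}\right\rfloor-\left\lfloor\frac{jd}{p}\right\rfloor-W_j,
\]
where $W_j:=\sum_{\ell=0}^{p-2-j}\#\bigl(\mathbb Z\cap(\tfrac{\ell d}{p}+\tfrac{jd}{p^2},\ \tfrac{\ell d}{p}+\tfrac{(j+1)d}{p^2}]\bigr)\ge 0$ counts lattice points in $p-1-j$ pairwise disjoint intervals each of length $d/p^2<1$. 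Since each such interval holds at most one lattice point, $W_j$ equals the number of $\ell\in\{0,\dots,p-2-j\}$ for which $d(\ell p+j)\bmod p^2$ falls in the window $\{p^2-d,\dots,p^2-1\}$. We then want $\Delta(j)\ge 0$ for $1\le j\le\tfrac{p-1}{2}$ and $\Delta(j)\le 0$ for $\tfrac{p+1}{2}\le j\le p-2$, which yields $S(1)\le\cdots\le S\!\left(\tfrac{p+1}{2}\right)\ge\cdots\ge S(p-1)$ and hence the lemma. Replacing every floor and every interval-count above by its real value turns $\Delta(j)$ into $\tfrac{(p-1)d}{p^2}(p-1-2j)$, which exhibits exactly the monotonicity we want; the remaining task is to verify that the discrete corrections do not spoil it. The step we expect to be the main obstacle is making this precise: the accumulated rounding slack in $W_j$ is of the same order of magnitude as the main term $\tfrac{(p-1)d}{p^2}(p-1-2j)$, so crude estimates do not settle the sign --- this is most delicate when $j$ is close to $\tfrac{p-1}{2}$, where the main term vanishes. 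One must instead determine the residue count defining $W_j$ exactly (bookkeeping the relevant residues modulo $p$ and $p^2$ as $\ell$ varies) and check that these corrections never reverse the sign prescribed by whether $j\le\tfrac{p-1}{2}$.
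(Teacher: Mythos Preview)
The paper does not prove this lemma at all; it simply cites \cite[Cor.~2.15]{promys}. So there is no argument in the paper to compare your approach against, and the relevant question is only whether your direct attack is complete.

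It is not. Your reduction to $d<p^{2}$ via $d=p^{2}t+r$ and the identity $S(j)=(p-1)t\,j(p-j)+S_{r}(j)$ is correct, and your rewriting of $S(j)$ and $\Delta(j)$ through the substitution $\ell=i-j$ and the identity $\lfloor x\rfloor-\lfloor x-c\rfloor=\#\bigl(\mathbb{Z}\cap(x-c,x]\bigr)$ is a sensible setup. But the argument stops precisely at the point that matters: you yourself say that ``crude estimates do not settle the sign'' of $\Delta(j)$, that the rounding slack in $W_{j}$ is of the same order as the main term, and that ``one must instead determine the residue count defining $W_{j}$ exactly \dots\ and check that these corrections never reverse the sign.'' That check is never carried out. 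What remains is exactly the heart of the lemma --- establishing that the maximum over $j$ is attained at $j=\tfrac{p+1}{2}$ --- and as written you have only asserted it, not proved it. In particular, the delicate cases $j=\tfrac{p-1}{2}$ and $j=\tfrac{p+1}{2}$, where your real-valued main term vanishes, are left entirely to the reader.

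If you want a self-contained proof rather than a citation, you must either finish the residue bookkeeping you outline (computing $W_{j}$ exactly in terms of $d\bmod p$ and $d\bmod p^{2}$ and verifying the sign of $\Delta(j)$ case by case), or find a different argument that avoids the first-difference analysis altogether. As it stands, the proposal is a proof sketch with the decisive step missing.
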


\begin{proof}
    \cite[Cor 2.15]{promys}.
\end{proof}

\subsection{Computations with Artin-Schreier curves}

Let $p$ be an odd prime and let $k$ be an algebraically closed field with characteristic $p$. For an Artin-Schreier curve $X$ defined by $y^p-y=f$ such that $f \in k[x]$ has a pole of order $d$ at infinity, the set $\mathcal B_X$, defined by 
\begin{equation}\label{basis}
    \mathcal B_X:=\left\{y^ix^jdx: 0 \leq i \leq p-2, 0 \leq j \leq \left\lceil \frac {(p-i-1)d}{p} \right\rceil -2 \right\},
\end{equation}
is a basis for $H^0(X,\Omega_X^1)$ \cite[Lemma 3.7]{boohercais}. We will examine the specific cases $d=p^2-1$ and $d=p^2+1$, so the following lemmas will be computationally helpful. 

\begin{lemma}\label{basis size}
For odd prime $p$ and integers $0 \leq i \leq p-2$, the following two equations hold:
\begin{equation*}
    \left\lceil \frac {(p-i-1)(p^2-1)}{p} \right\rceil-2=p^2-(1+i)p-2,
\end{equation*}
\begin{equation*}
    \left\lceil \frac {(p-i-1)(p^2+1)}{p} \right\rceil-2=p^2-(1+i)p-1.
\end{equation*}
\end{lemma}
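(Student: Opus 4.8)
The plan is to reduce both identities to an elementary estimate on $m/p$ after the substitution $m = p - i - 1$. First I would record that the hypothesis $0 \le i \le p-2$ is exactly equivalent to $1 \le m \le p-1$, hence $0 < m/p < 1$; this strict two-sided bound is the only place the constraint on $i$ enters, and it is worth isolating because it is precisely what keeps $m/p$ away from being an integer.

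For the first identity, rewrite $\frac{(p-i-1)(p^2-1)}{p} = \frac{m(p^2-1)}{p} = mp - \frac{m}{p}$. Since $mp \in \mathbb{Z}$ and $0 < m/p < 1$, the real number $mp - m/p$ lies strictly between $mp-1$ and $mp$, so its ceiling is $mp$. Subtracting $2$ and substituting $m = p-i-1$ gives $mp - 2 = (p-i-1)p - 2 = p^2 - (1+i)p - 2$, which is the claimed value. For the second identity, rewrite $\frac{(p-i-1)(p^2+1)}{p} = mp + \frac{m}{p}$; again $0 < m/p < 1$ forces $mp < mp + m/p < mp+1$, so the ceiling is $mp+1$, and subtracting $2$ yields $mp - 1 = p^2 - (1+i)p - 1$. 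I would present both cases together in one short paragraph, since they differ only in the sign of the $m/p$ term.

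There is no genuine obstacle here; the single point requiring care is the strict inequality $0 < m/p < 1$, i.e.\ that $m$ is neither $0$ nor a multiple of $p$, which is immediate from the stated range of $i$ but is essential — without it the ceiling computation would pick up an extra unit and the formulas would fail at the boundary. (In the paper this lemma feeds directly into computing $\dim_k H^0(X,\Omega_X^1) = |\mathcal B_X|$ for the cases $d = p^2 \pm 1$ via the basis \eqref{basis}.)
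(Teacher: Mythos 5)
Your proof is correct and is exactly the elementary computation the paper leaves implicit (its proof reads only ``Elementary''): substituting $m=p-i-1$ and using $0<m/p<1$ to evaluate $\lceil mp\mp m/p\rceil$ is the natural argument, and your identification of the strict bound on $m/p$ as the only point of care is right.
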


\begin{proof}
    Elementary.
\end{proof}

\begin{lemma}\label{simplified lower bounds}
If $p$ is an odd prime, then
\begin{equation*}
L(p^2 + 1) = L(p^2 - 1) = \left(\frac{p-1}{2}\right) \frac{p^2-1}{2}.
\end{equation*}
\end{lemma}

\begin{proof}
    We only prove $L(p^2+1)= \left(\frac{p-1}{2}\right) \frac{p^2-1}{2}$. The proof showing $L(p^2-1)= \left(\frac{p-1}{2}\right) \frac{p^2-1}{2}$ is similar. 
    Setting $d=p^2+1$ in Lemma \ref{simplified formula} and simplifying gives
    \begin{equation*}
        L(p^2 + 1)= \sum_{i=\frac{p+1}{2}}^{p-1} ip +\left\lfloor \frac{i}{p}\right\rfloor -\left(ip + \frac{-p^2+1}{2} + 
        \left\lfloor- \frac{-2ip+p^2-1}{2p^2}\right\rfloor\right).
    \end{equation*}
    Since $\frac{p+1}{2} \leq i \leq p-1$, the range for the numerator of the floor term will be $0 < -(-2ip+p^2-1) < 2p^2$. Hence $\left\lfloor \frac{i}{p}\right\rfloor = \left\lfloor -\frac{-2ip+p^2-1}{2p^2}\right\rfloor = 0$. Simplifying gives 
    \begin{equation*}
    L(p^2-1) = \sum_{i=\frac{p+1}{2}}^{p-1} \frac{p^2-1}{2} = \left(\frac{p-1}{2}\right) \frac{p^2-1}{2}. \qedhere
    \end{equation*}
\end{proof}

\begin{lemma}\label{farnellcase}
If $p$ is an odd prime, then

\begin{equation*}
    L(p - 1) = \frac{(p-1)^2}{4}.
\end{equation*}
\end{lemma}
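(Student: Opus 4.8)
The plan is to substitute $d = p-1$ directly into the simplified formula of Lemma~\ref{simplified formula} and to evaluate the two floor functions occurring in each summand. Fix an index $i$ with $\frac{p+1}{2}\le i\le p-1$. For the first floor, $\frac{i(p-1)}{p} = i - \frac{i}{p}$, and since $1\le i\le p-1$ we have $\frac{i}{p}\in(0,1)$, so $\left\lfloor\frac{i(p-1)}{p}\right\rfloor = i-1$.

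For the second floor, the key step is to rewrite the shift: expanding $(p-1)^2(p+1) = p^3 - p^2 - p + 1$ gives
\[
\left(1-\frac1p\right)\frac{(p+1)(p-1)}{2p} \;=\; \frac{(p-1)^2(p+1)}{2p^2} \;=\; \frac{p-1}{2} - \frac{p-1}{2p^2}.
\]
Hence the argument of the second floor is $\left(i - \frac{p-1}{2}\right) + \frac{-2ip + p - 1}{2p^2}$, and $i - \frac{p-1}{2}$ is an integer because $p$ is odd. A short estimate, analogous to the one in the proof of Lemma~\ref{simplified lower bounds}, shows that for $\frac{p+1}{2}\le i\le p-1$ the correction $\frac{-2ip+p-1}{2p^2}$ lies strictly between $-1$ and $0$ (the numerator decreases from $-p^2-1$ at $i=\frac{p+1}{2}$ to $-2p^2+3p-1$ at $i=p-1$, both of which lie strictly between $-2p^2$ and $0$). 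Therefore the second floor equals $i - \frac{p-1}{2} - 1$.

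Subtracting, each summand equals $(i-1) - \left(i - \frac{p-1}{2} - 1\right) = \frac{p-1}{2}$, independent of $i$. Since $i$ ranges over the $\frac{p-1}{2}$ values $\frac{p+1}{2},\dots,p-1$, we conclude $L(p-1) = \frac{p-1}{2}\cdot\frac{p-1}{2} = \frac{(p-1)^2}{4}$. The only delicate point is the bounding of the fractional correction needed to pin down the value of the second floor; everything else is routine substitution and arithmetic.
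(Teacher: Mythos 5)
Your proof is correct and follows essentially the same route as the paper, which handles this lemma by the same substitution-and-floor-evaluation technique used for Lemma~\ref{simplified lower bounds}: plug $d=p-1$ into Lemma~\ref{simplified formula}, split off the integer parts, and bound the fractional corrections to evaluate the floors. All the arithmetic checks out, including the key bound placing the numerator $-2ip+p-1$ strictly between $-2p^2$ and $0$ for $\frac{p+1}{2}\le i\le p-1$.
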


\begin{proof}
    The proof of Lemma \ref{farnellcase} is similar to the proof of Lemma \ref{simplified lower bounds}.
\end{proof}

\begin{lemma} \label{addingpsquared}
If $p$ is an odd prime and $d$ is a positive integer,
\begin{equation*}
     L(d+p^2)=L(d)+L(p^2+1)   .
\end{equation*}
\end{lemma}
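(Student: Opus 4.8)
The plan is to work directly from the simplified expression for $L$ provided by Lemma \ref{simplified formula}. Writing $\left(1-\frac1p\right)\frac{(p+1)d}{2p} = \frac{(p^2-1)d}{2p^2}$, the formula reads
\begin{equation*}
    L(d) = \sum_{i=\frac{p+1}{2}}^{p-1}\left(\left\lfloor\frac{id}{p}\right\rfloor - \left\lfloor\frac{id}{p} - \frac{(p^2-1)d}{2p^2}\right\rfloor\right).
\end{equation*}
First I would substitute $d + p^2$ for $d$ and analyze each of the two floor terms separately, using the elementary fact that $\lfloor x + n\rfloor = \lfloor x\rfloor + n$ for any integer $n$.

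For the first term, $\frac{i(d+p^2)}{p} = \frac{id}{p} + ip$, and since $ip \in \mathbb Z$ we get $\left\lfloor\frac{i(d+p^2)}{p}\right\rfloor = \left\lfloor\frac{id}{p}\right\rfloor + ip$. For the second term, expanding gives
\begin{equation*}
    \frac{i(d+p^2)}{p} - \frac{(p^2-1)(d+p^2)}{2p^2} = \left(\frac{id}{p} - \frac{(p^2-1)d}{2p^2}\right) + \left(ip - \frac{p^2-1}{2}\right),
\end{equation*}
and here the key observation is that $ip - \frac{p^2-1}{2}$ is an integer because $p$ is odd. Hence this floor also splits cleanly, and each summand of $L(d+p^2)$ equals the corresponding summand of $L(d)$ plus $ip - \left(ip - \frac{p^2-1}{2}\right) = \frac{p^2-1}{2}$.

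Summing over the $\frac{p-1}{2}$ values $i = \frac{p+1}{2}, \dots, p-1$ then yields $L(d+p^2) = L(d) + \frac{p-1}{2}\cdot\frac{p^2-1}{2}$. Finally I would invoke Lemma \ref{simplified lower bounds}, which identifies $\frac{p-1}{2}\cdot\frac{p^2-1}{2}$ with $L(p^2+1)$, completing the proof. There is no real obstacle here: the argument is pure bookkeeping with floor functions, and the only point requiring a moment's care is verifying the integrality of $ip - \frac{p^2-1}{2}$, which is exactly where the hypothesis that $p$ is odd enters.
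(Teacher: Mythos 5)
Your proposal is correct and follows essentially the same route as the paper: substitute $d+p^2$ into the formula of Lemma \ref{simplified formula}, pull the integer shifts (namely $ip$ and $ip - \frac{p^2-1}{2}$) out of the floors so each summand gains $\frac{p^2-1}{2}$, and identify the resulting total $\frac{p-1}{2}\cdot\frac{p^2-1}{2}$ with $L(p^2+1)$ via Lemma \ref{simplified lower bounds}. Your explicit check that $\frac{p^2-1}{2}$ is an integer because $p$ is odd is a nice touch the paper leaves implicit.
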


\begin{proof}
    Plugging $d+p^2$ into Lemma \ref{simplified formula} gives
        \begin{align*}
        L(d+p^2)&=\sum_{i=\frac{p+1}{2}}^{p-1}\left\lfloor\frac{i(d+p^2)}{p}\right\rfloor -\left\lfloor\frac{i(d+p^2)}{p}-\left(1-\frac{1}{p}\right)\frac{(p+1)(d+p^2)}{2p}\right\rfloor\\
        &=\sum_{i=\frac{p+1}{2}}^{p-1}\left\lfloor\frac{id}{p}\right\rfloor -\left\lfloor\frac{id}{p}-\left(1-\frac{1}{p}\right)\frac{(p+1)d}{2p}\right\rfloor+\frac{p^2-1}{2}\\
        &=L(d)+\left(\frac{p-1}{2}\right)\frac{p^2-1}{2}.
        \end{align*}
    Hence, with Lemma \ref{simplified lower bounds}, $L(d+p^2)=L(d)+L(p^2+1)$.
\end{proof}

We will use the lexicographic ordering on $\mathcal B_X$ with $y>x$. i.e. for differentials $y^ix^jdx$ and $y^ax^bdx$, if $y^ix^jdx > y^ax^bdx$ then either $i>a$ or $i=a$ and $j>b$. For any element $\alpha \in \mathcal B_X$, define $B_\alpha$ to be the set of basis vectors smaller than $\alpha$ under the lexicographic order and denote the span of the image of a set under the Cartier operator by $\text{Span}(\mathcal C_X(A))$. In other words, we write
\begin{equation*}
B_\alpha:= \{\beta \in \mathcal B_X: \beta < \alpha\},
\end{equation*}
\begin{equation*} 
\text{Span}(\mathcal C_X(A)): = \text{Span}(\{\mathcal C_X(a): a \in A\}).
\end{equation*}

The defining equation $y^p-y=f$ with $f \in k[x]$ can be used to rewrite differentials in $H^0(X,\Omega_X^1)$. For $y^mx^ndx \in H^0(X,\Omega_X^1)$, the differential can be rewritten $y^mx^ndx=(y^p-f)^mx^ndx$. This equivalence is useful in calculations with the Cartier operator since the rewritten form works well with the properties in \eqref{cartier properties} to determine what the Cartier of any differential is. The following theorems concern cases where $f$ is a binomial and thus the following fact will be useful.

\begin{lemma}\label{binomial formula}
Fix odd prime $p$, distinct positive integers $d,e\not\equiv 0 \pmod p$ and nonzero $a,b \in k$. Let $X \to \mathbb P^1$ be the Artin-Schreier cover defined by the equation $y^p-y=-(ax^d+bx^e)$. A differential $y^mx^ndx \in H^0(X,\Omega_X^1)$ can be expressed as 
    \begin{equation*}
    y^mx^ndx  = \sum_{i=0}^m\sum_{j=0}^{m-i} {m\choose i}{m-i\choose j} a^{m-i-j} b^j y^{pi} x^{(m-i-j)d+je+n}dx.
    \end{equation*}

\end{lemma}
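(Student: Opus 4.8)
The plan is to substitute the defining equation into the expression $y^m$ and expand. Rearranging $y^p - y = -(ax^d + bx^e)$ gives the identity of rational functions on $X$
\[
    y = y^p + ax^d + bx^e,
\]
so that $y^m = (y^p + ax^d + bx^e)^m$ as an equality in the function field $k(x)[y]/(y^p - y + ax^d + bx^e)$ of $X$. First I would apply the binomial theorem, grouping the summands as $y^p$ and $ax^d + bx^e$:
\[
    y^m = \sum_{i=0}^m \binom{m}{i} y^{pi}(ax^d + bx^e)^{m-i}.
\]
Then I would expand $(ax^d + bx^e)^{m-i}$ by the binomial theorem a second time,
\[
    (ax^d + bx^e)^{m-i} = \sum_{j=0}^{m-i}\binom{m-i}{j} a^{m-i-j} b^j x^{(m-i-j)d + je},
\]
and substitute this back in. Multiplying the resulting double sum through by $x^n\, dx$ produces exactly the claimed formula, since multiplying the rational-function identity for $y^m$ by the differential $x^n\,dx$ preserves the equality inside $H^0(X,\Omega^1_X)$.

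An alternative is to induct on $m$, using $y^{m+1} = (y^p + ax^d + bx^e)\, y^m$ together with Pascal's rule to reindex the binomial coefficients; but the direct double application of the binomial theorem is cleaner and avoids the reindexing bookkeeping, so I would present that.

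I do not anticipate any real obstacle here: the lemma is a formal algebraic identity, and the hypotheses that $d \neq e$ and $d,e \not\equiv 0 \pmod p$ play no role in the derivation itself (they are relevant only when the formula is later applied to compute the Cartier operator, where one needs to track which exponents are congruent to $-1$ modulo $p$). The only point worth stating explicitly is that the substitution is legitimate because $y$ genuinely is a rational function on $X$, so the manipulation takes place in its function field.
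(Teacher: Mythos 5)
Your proposal is correct and matches the paper's proof exactly: the paper likewise substitutes $y^m x^n\,dx = (y^p + (ax^d+bx^e))^m x^n\,dx$ using the defining equation and then applies the binomial theorem. Your write-up is in fact slightly more careful, since you make the double binomial expansion and the sign bookkeeping explicit.
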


\begin{proof}
    Since $y^m x^n dx \in H^0(X,\Omega_X^1)$, we can use the equation $y^p-y=ax^d+bx^e$ to substitute $y^m x^n dx= (y^p+(ax^d+bx^e))^m x^n dx$. The binomial theorem then gives the desired formula.
\end{proof}

\subsection{Exhibiting curves with minimal $a$-number}

In this section, we exhibit curves for all odd primes $p$ with some fixed ramification break $d$ such that their $a$-number is equal to the lower bound $L(d)$. These curves will be the base curves that combine to get an infinite family of curves in Section \ref{patching curves}.

\begin{proposition} \label{farnellresult}
    Let $p$ be an odd prime. If $f(x) \in k[x]$ has degree $p-1$, then the Artin-Schreier cover $X\to \mathbb P^1$ defined by $y^p-y=f(x)$ has a-number equal to $L(p-1)$.
\end{proposition}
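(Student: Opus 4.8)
The plan is to compute the rank of the Cartier operator directly on the basis $\mathcal{B}_X$ from \eqref{basis} and show that the kernel has dimension exactly $L(p-1) = (p-1)^2/4$. Since the $a$-number is always at least $L(p-1)$ by \eqref{initiallb}, it suffices to show $\dim \ker \mathcal{C}_X \leq L(p-1)$, equivalently that $\operatorname{rank}(\mathcal{C}_X) \geq g - L(p-1)$, where $g$ is the genus. When $\deg f = p-1$, Lemma \ref{basis size}-type bookkeeping gives that for each $0 \le i \le p-2$ the index $j$ in $\mathcal B_X$ runs $0 \le j \le \lceil (p-i-1)(p-1)/p \rceil - 2 = p - i - 3$ (for $i \le p-3$; the rows $i = p-2, p-3$ contribute nothing). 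So $\mathcal B_X$ consists of the $y^i x^j dx$ with $0 \le i \le p-3$ and $0 \le j \le p-i-3$, a triangular index set, and $g = \binom{p-1}{2}$.

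First I would fix a polynomial representative: the statement says ``$f$ of degree $p-1$,'' so up to the standard Artin-Schreier normalization (absorbing $p$-th powers and scaling $x$) one may assume $f$ has no terms of degree divisible by $p$, and the leading term is $x^{p-1}$; in fact I expect the cleanest choice to be simply $f = -x^{p-1}$, reducing to a single explicitly analyzable curve, since the $a$-number here should be constant over all degree-$(p-1)$ polynomials (this is essentially the Farnell--Pries regime, cf. the reference to \cite{farnellpries}, because $d = p-1$ satisfies the congruence condition needed there). Then I would use the rewriting $y^i x^j dx = (y^p - f)^i x^j dx = \sum_{\ell=0}^{i} \binom{i}{\ell} y^{p\ell} (-f)^{i-\ell} x^j dx$ together with the Cartier properties \eqref{cartier properties}: $\mathcal{C}_X$ pulls out $y^\ell$ and acts on $(-f)^{i-\ell} x^j dx = \pm x^{(i-\ell)(p-1) + j} dx$ by extracting the coefficient of $x^{p-1}$ after the substitution $x^{p m + p - 1} dx \mapsto x^m dx$. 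Concretely, $\mathcal{C}_X(y^i x^j dx)$ is a sum over $\ell$ of terms $c_{i,\ell,j}\, y^\ell x^{((i-\ell)(p-1)+j+1)/p - 1} dx$ whenever $(i-\ell)(p-1) + j \equiv -1 \pmod p$, i.e. whenever $j \equiv \ell - i - 1 \equiv \ell + 1 - i \pmod p$; within the triangular range this pins down $\ell$ (and hence the target basis vector) as a function of $i,j$, with coefficient a nonzero binomial-coefficient-type constant times the relevant coefficient of a power of $f$.

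The key step is then to organize the basis so that $\mathcal{C}_X$ is "upper triangular with many nonzero diagonal entries" with respect to the lexicographic order on $\mathcal{B}_X$ introduced just before Lemma \ref{binomial formula}: for each $\alpha = y^i x^j dx$ I want to show either $\mathcal{C}_X(\alpha) = 0$ or $\mathcal{C}_X(\alpha) \notin \operatorname{Span}(\mathcal{C}_X(B_\alpha))$, the latter happening for a set of $\alpha$ of size exactly $g - L(p-1)$. Counting: $\mathcal C_X(y^i x^j dx)$ is nonzero precisely when the congruence $j \equiv \ell+1-i \pmod p$ has a solution $\ell$ with $0 \le \ell \le i$ such that the target index $((i-\ell)(p-1)+j+1)/p - 1$ lands in the admissible range $0 \le \cdot \le p - \ell - 3$; tallying these over the triangle $\{0 \le i \le p-3,\ 0 \le j \le p-i-3\}$ should give exactly $g - (p-1)^2/4$ pivot rows, and a leading-term (largest basis vector appearing) argument shows the corresponding images are linearly independent. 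I expect the main obstacle to be precisely this combinatorial bookkeeping — correctly identifying, for each $(i,j)$, the unique surviving $\ell$, checking the target lies in range, verifying the pivot coefficients are nonzero in characteristic $p$ (binomial coefficients $\binom{i}{\ell}$ and coefficients of $(-x^{p-1})^{i-\ell}$, which for the monomial choice are manifestly units), and matching the pivot count to $g - L(p-1) = \binom{p-1}{2} - \frac{(p-1)^2}{4}$. An alternative, possibly cleaner, route is to quote Lemma \ref{farnellcase} for the value $L(p-1) = (p-1)^2/4$ and invoke the Farnell--Pries formula \cite{farnellpries} directly, since $d=p-1 \equiv -1 \pmod p$ with small quotient puts us squarely in their congruence case where the $a$-number equals a ramification-only expression that one checks equals $L(p-1)$; I would present the direct Cartier computation as the main argument and remark on the Farnell--Pries shortcut.
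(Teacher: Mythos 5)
The paper's proof of Proposition \ref{farnellresult} is exactly the ``shortcut'' you relegate to your final sentence: it computes $L(p-1)=\frac{(p-1)^2}{4}$ via Lemma \ref{farnellcase} and then cites \cite[Theorem 3.9]{farnellpries}, which states that this is the $a$-number of \emph{any} cover $y^p-y=f(x)$ with $\deg f = p-1$ (the case $d=p-1$ sits inside the Farnell--Pries congruence regime, since $p\equiv 1 \pmod{p-1}$). So your fallback route is correct and is precisely what the paper does; had you led with it, the proof would be complete in two lines.

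Your main proposed argument, the direct Cartier computation, has a genuine gap at the very first step: the reduction to $f=-x^{p-1}$. Artin--Schreier substitutions $f \mapsto f + (h^p-h)$ and coordinate changes in $x$ let you normalize the leading coefficient and kill terms of degree divisible by $p$, but they cannot remove the lower-order coefficients $c_{p-2}x^{p-2}+\dots+c_1x$; the assertion that the $a$-number is ``constant over all degree-$(p-1)$ polynomials'' is not a normalization fact but is essentially the content of the theorem you are proving, so assuming it is circular. Working with a general $f$, the powers $(-f)^{i-\ell}$ are no longer monomials, so your claim that the congruence $j\equiv \ell+1-i \pmod p$ ``pins down $\ell$'' fails: several values of $\ell$ can contribute to the same target basis vector, with coefficients depending on the coefficients of $f$, and you would need a leading-term/pivot argument in the style of Lemmas \ref{plusmatrixpart1} and \ref{plusmatrixpart2} to show the pivots survive for arbitrary lower-order terms. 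That bookkeeping is exactly the part you defer (``I expect the main obstacle to be precisely this combinatorial bookkeeping''), so the direct route is an outline rather than a proof. The clean fix is simply to promote your remark to the proof: quote Lemma \ref{farnellcase} and \cite{farnellpries}.
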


\begin{proof}
    By Lemma \ref{farnellcase}, we get the lower bound 
    \begin{equation*}
    L(p - 1) = \frac{(p-1)^2}{4}.
    \end{equation*}
    This is the $a$-number of any Artin-Schreier cover defined by $y^p-y=f(x)$ such that $f(x) \in k[x]$ has degree $p-1$ \cite[Theorem 3.9]{farnellpries}.
\end{proof}

\begin{lemma}\label{plusmatrixpart1}
Let $p$ be an odd prime and $d=p^2+1$. Let $X \to \mathbb P^1$ be the Artin-Schreier cover defined by the equation $y^p-y=-x^{d}-x^{d/2+p}$. Let $0 \leq k < \frac{p-1}{2}$ and $0 \leq l \leq \frac{d(p/2-1-k)}{p}$.
Then there is $y^mx^ndx \in \mathcal B_X$ with $m < \frac{p-1}{2}$ such that the largest term of $\mathcal C_X(y^mx^ndx)$ is $y^kx^ldx$.
\end{lemma}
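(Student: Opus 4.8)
The plan is to work explicitly with the basis $\mathcal B_X$ from \eqref{basis} and the rewriting $y^m x^n dx = (y^p - f)^m x^n dx$ with $f = -x^d - x^{d/2+p}$, then apply the Cartier properties \eqref{cartier properties} term by term. Concretely, for a candidate $y^m x^n dx$ with $m < \frac{p-1}{2}$, I would use Lemma \ref{binomial formula} (with $a=b=1$, exponents $d$ and $e = d/2+p$) to expand $y^m x^n dx$ as a sum of terms $\binom{m}{i}\binom{m-i}{j}\, y^{pi} x^{(m-i-j)d + je + n}\, dx$. Applying $\mathcal C_X$, the only surviving terms are those whose $x$-exponent is $\equiv -1 \pmod p$; since $d = p^2+1 \equiv 1$, $e = d/2+p \equiv \frac{p+1}{2} \pmod p$, such a term contributes $y^{i} x^{((m-i-j)d + je + n + 1)/p - 1}\, dx$. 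The strategy is: given target $(k,l)$ with $0 \le k < \frac{p-1}{2}$ and $0 \le l \le \frac{d(p/2 - 1 - k)}{p}$, choose $m$, $n$, and the relevant $(i,j)$ so that the image term with the largest $y$-exponent is exactly $y^k x^l dx$.

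The natural choice is to take $i = k$ (so the leading $y$-power in the image is $y^k$) and to push all the "weight" onto the $x^d$ part by taking $j = 0$ in the dominant term, i.e. aim for $(m - k)d + n + 1 = p(l+1)$, which forces $n = p(l+1) - 1 - (m-k)d$. Then I must choose $m$ with $k < m < \frac{p-1}{2}$ making $n$ land in the allowed range $0 \le n \le \lceil (p-m-1)d/p\rceil - 2 = p^2 - (1+m)p - 1$ (using Lemma \ref{basis size}). The hypothesis $l \le \frac{d(p/2-1-k)}{p}$ is exactly what guarantees such an $m$ exists: it bounds how large $n$ would have to be, and one checks $m = k+1$ (or the smallest admissible $m$) works, with the inequality on $l$ translating into $n \ge 0$ and $n$ not exceeding the basis bound. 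I also need to confirm $m < \frac{p-1}{2}$, which follows since $k < \frac{p-1}{2}$ and the needed increment over $k$ is controlled by the same inequality on $l$.

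The key remaining point is that $y^k x^l dx$ is genuinely the \emph{largest} term of $\mathcal C_X(y^m x^n dx)$ in the lexicographic order with $y > x$, not merely one of the terms. Since every surviving term has $y$-exponent $i \le m$, and the terms with $y$-exponent exactly $k = i$ come from the part of the expansion with that fixed $i$, I need to rule out a surviving term with $y$-exponent $> k$, and among those with $y$-exponent $= k$, show $x^l$ is the top $x$-power. The first is handled by choosing $m$ minimal (so $m = k+1$ when possible), making $i = m$ impossible to survive for the relevant congruence, or by checking the $x$-exponents for $i > k$ are not $\equiv -1 \pmod p$; the second is the point where the congruences $d \equiv 1$, $e \equiv \frac{p+1}{2} \pmod p$ matter, since replacing a factor of $x^d$ by $x^e$ strictly decreases the $x$-exponent (as $e < d$) while keeping $y$-exponent $k$, so the $j=0$ term dominates. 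I expect this verification — that no larger term survives and that the arithmetic of choosing $m$ stays inside the basis box — to be the main obstacle; the rest is a direct computation with the binomial expansion and \eqref{cartier properties}.
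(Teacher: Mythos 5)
Your overall strategy matches the paper's: expand $y^mx^ndx$ via Lemma \ref{binomial formula}, arrange for the term $y^{pk}x^{lp+p-1}dx$ to appear (so that its Cartier image is $y^kx^ldx$), and then argue it gives the largest term. But there is a genuine gap at the central step: you cannot always take $j=0$. With $j=0$ and $i=k$ the reachable $x$-exponents are exactly $(m-k)d+n$ with $0\le n\le p^2-(1+m)p-1$, and since $d=p^2+1$ exceeds the length of each such window, these windows leave gaps of length about $(1+m)p$ between consecutive values of $m$. Targets $lp+p-1$ with $l$ inside the lemma's allowed range do land in these gaps. Concretely, take $p=5$, $k=0$, $l=4$ (allowed, since $l\le \frac{26\cdot 3/2}{5}=7.8$): the target exponent is $24$, but $m=0$ only reaches $n\le 19$ and $m=1$ starts at $26$, so no choice of $m,n$ with $j=0$ works. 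The paper's proof fills exactly these gaps by sometimes using the $j=1$ term, i.e.\ one factor of $x^{d/2+p}$, whose shift $\frac d2-p$ is small enough to reach into the gap (here $m=1$, $n=6$, $j=1$ gives exponent $26-8+6=24$). This is the entire reason the defining equation is a binomial: your argument as written never genuinely uses the monomial $x^{d/2+p}$ except to discard it as subdominant, so it would apply verbatim to $y^p-y=-x^{d}$, which (as the introduction's $y^3-y=x^7$ example suggests) need not attain the lower bound.

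A secondary issue is that the ``largest term'' verification is only sketched. Once $j$ can be $0$ or $1$ depending on the case, ruling out a surviving term with $y$-exponent $>k$, and ruling out a smaller $j$ at $y$-exponent $k$, requires the case analysis the paper carries out (reducing the exponent congruence to $j\equiv 2a-2c-2b\pmod p$ and using the bounds on $a,c,j$); ``choose $m$ minimal'' does not by itself dispose of the $i>k$ terms. To repair the proof you should define the increment $a=m-k$ and the offset $b$ as in the paper (choosing between $b=ad$ and $b=(a-1)d+\frac d2+p$ according to which lands $n=lp+p-1-b$ in the basis box), and then run the maximality argument for both values of $j$.
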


\begin{proof}
    Note that $\mathcal C_X(y^{pk}x^{lp+(p-1)}dx)=y^kx^ldx$ using the properties of the Cartier operator in \eqref{cartier properties}. It is necessary to find $y^mx^ndx$ such that its expansion has the term $y^{pk}x^{lp+(p-1)}dx$. Let $a=\left\lfloor\frac{2lp+p^2-1}{2(p^2+1)}\right\rfloor$. Note that $0\leq a \leq \frac{p-3}{2}$. Let 
    \begin{equation*}
        b = \begin{cases}
            0 & \text{if } a=0\\
            ad & \text{if } a\neq 0 \text{ and } ad\geq lp+(p-1)\\
            (a-1)d + \frac{d}{2}+p & \text{if } a\neq 0 \text{ and } ad < lp+(p-1)
        \end{cases}.
    \end{equation*}
    Set $m=a+k$. Set $n=lp+(p-1)-b$. Check using \eqref{basis} that $y^mx^ndx \in \mathcal B_X$ with $m <\frac{p-1}{2}$.
    Using Lemma \ref{binomial formula}, we write 
    \begin{equation} \label{exponents2}
        y^mx^ndx  = \sum_{i=0}^m\sum_{j=0}^{m-i} {m\choose i}{m-i\choose j} y^{pi}x^{(m-i)d-j(d/2-p)+n}dx.
    \end{equation}
    Choosing $i = k$ and $j=\frac{2ad-2b}{d-2p}$ gives the desired term of $y^{pk}x^{lp+(p-1)}dx$. Recall from \eqref{cartier properties} that $\mathcal C_X(y^{pe}x^fdx) \neq 0$ if and only if $f \equiv -1 \pmod p$. So, showing $y^kx^ldx$ is the largest term of $\mathcal C_X(y^mx^ndx)$ is equivalent to showing $i=k$ is the largest $0\leq i \leq m$ such that there is $0\leq j\leq m-i$ with $(m-i)d-j(\frac d 2-p)+n \equiv -1 \pmod p$ and $j=\frac{2ad-2b}{d-2p}$ is the smallest such $j$. Let $i=k+c$ be the largest such $i$. Note that $0\leq c \leq a$ and $0 \leq j \leq a-c$.
    Substituting $m,n,c$ into the congruence and simplifying gives $j\equiv 2a-2c-2b \pmod p$. If $b=0$, then $a=c=j=0$ and $i=k$ as desired. If $b=ad$ we get that $j\equiv -2c \pmod p$. Due to the bounds on $a,c,j$, we get $j=c=0$ and $i=k$ as desired. If $b=(a-1)d+\frac{d}2+p$, then we have $j\equiv 1-2c \pmod p$. Again, due to the bounds on $a,c,j$ we get $c=0$. Hence, $j=1$ and $i=k$ as desired. In all cases we find that the choice of $i=k$ and $j=\frac{2ad-2b}{d-2p}$ corresponds to the largest term of $\mathcal C_X(y^mx^ndx)$. Hence $y^mx^ndx$

\end{proof}

\begin{lemma}\label{plusmatrixpart2}
    Let $p$ be an odd prime and $d=p^2+1$. Let $X \to \mathbb P^1$ be the Artin-Schreier cover defined by the equation $y^p-y=-x^{d}-x^{d/2+p}$. For all $\alpha = y^mx^ndx \in \mathcal B_X$ with $m \geq \frac {p-1}{2}$, $\mathcal C_X(\alpha) \not\in \emph{\text{Span}}(\mathcal C_X(\mathcal B_\alpha))$.
\end{lemma}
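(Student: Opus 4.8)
The plan is to produce, for each $\alpha = y^mx^ndx \in \mathcal B_X$ with $m \ge \frac{p-1}{2}$, a single monomial $w(\alpha)$ that appears in $\mathcal C_X(\alpha)$ but in $\mathcal C_X(\beta)$ for no $\beta \in \mathcal B_\alpha$. This is enough: in any relation $\mathcal C_X(\alpha) = \sum_{\beta \in \mathcal B_\alpha} c_\beta \mathcal C_X(\beta)$ the coefficient of $w(\alpha)$ would be nonzero on the left and zero on the right, so $\mathcal C_X(\alpha) \notin \text{Span}(\mathcal C_X(\mathcal B_\alpha))$. The monomial $w(\alpha)$ will be the leading term of $\mathcal C_X(\alpha)$ in the lexicographic order with $y > x$, just as in the proof of Lemma~\ref{plusmatrixpart1}.

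First I would make $\mathcal C_X(\alpha)$ explicit. Since $d = p^2+1$ and $e := d/2 + p = (p+1)^2/2$ are distinct and prime to $p$, Lemma~\ref{binomial formula} applies, and combining it with \eqref{cartier properties} gives $\mathcal C_X(\alpha) = \sum \binom{m}{i}\binom{m-i}{j}\, y^i x^{b(i,j)}dx$, the sum over $0 \le i \le m$, $0 \le j \le m-i$ with $(m-i-j)d + je + n \equiv -1 \pmod p$ and $b(i,j) = \frac{(m-i-j)d+je+n-(p-1)}{p}$. Reducing the congruence mod $p$ (using $d \equiv 1$, $e \equiv \frac{p+1}{2}$, and $d/2 - p = \frac{(p-1)^2}{2}$) turns it into the single condition $j \equiv 2(m+n+1-i) \pmod p$; as $m \le p-2$, for each $i$ there is at most one admissible $j$, and since $d/2 - p > 0$ the $x$-exponent is largest for the smallest $j$. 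Hence $w(\alpha) = y^{i_0}x^{b_0}dx$ with $i_0 = m - c_0$, where $c_0 \ge 0$ is minimal with $\big(2(n+1+c_0) \bmod p\big) \le c_0$, $j_0 = 2(n+1+c_0) \bmod p \in [0,c_0]$, and $b_0 = b(i_0,j_0)$. Along the way I would record two facts: (i) $c_0 \le \frac{p-1}{2}$, because the integer intervals $[c,2c]$ for $0 \le c \le \frac{p-1}{2}$ cover $\{0,\dots,p-1\}$ (this also shows $\mathcal C_X(\alpha) \ne 0$ and $i_0 \ge 0$); and (ii) $j_0 \in \{0,1\}$, because $j_0 \ge 2$ would make the defining inequality of $c_0$ already hold at $c_0 - 1$. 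The coefficient $\binom{m}{c_0}\binom{c_0}{j_0}$ is then nonzero in $k$ since $j_0 \le c_0 \le \frac{p-1}{2} \le m \le p-2$.

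The heart of the matter is to show $w(\alpha)$ does not occur in $\mathcal C_X(\beta)$ for $\beta = y^{m'}x^{n'}dx < \alpha$. If $m' < i_0$ this is automatic, since $\mathcal C_X(\beta)$ has all $y$-degrees $\le m'$. Otherwise $i_0 \le m' \le m$ (and $n' < n$ when $m' = m$), and one examines the $y^{i_0}$-part of $\mathcal C_X(\beta)$: again it is a single monomial, coming from the unique admissible $j'$, and it equals $w(\alpha)$ precisely when
\[
(j_0 - j')\frac{(p-1)^2}{2} - (m - m')(p^2+1) = n - n'.
\]
The right-hand side forces $j_0 - j' \ge 1$; since $j_0 \le 1$ this means $j_0 = 1$ and $j' = 0$, and the identity becomes $n - n' = \frac{(p-1)^2}{2} - (m-m')(p^2+1)$. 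If $m' < m$, substituting $0 \le n' \le p^2 - (1+m')p - 1$ (Lemma~\ref{basis size}) and using $m \ge \frac{p-1}{2}$ together with $m - m' \le c_0 \le \frac{p-1}{2}$ forces $p \le -3$, a contradiction. If $m' = m$, the identity reads $n - n' = \frac{(p-1)^2}{2}$, so $\frac{(p-1)^2}{2} \le n \le p^2 - (1+m)p - 1$; but $m \ge \frac{p-1}{2}$ makes every $n$ in that range satisfy $n \bmod p \in [\frac{p+1}{2}, p-1]$, while $j_0 = 1$ forces $n \equiv \frac{p-1}{2} - c_0 \pmod p$ with $1 \le c_0 \le \frac{p-1}{2}$, i.e.\ $n \bmod p \in [0, \frac{p-3}{2}]$ — impossible. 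A shorter version of the same argument handles $j_0 = 0$ (where $j_0 - j' \ge 1$ is already impossible). So $w(\alpha)$ is the desired distinguishing monomial.

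I expect the main obstacle to be assembling the auxiliary facts so that the box bounds of Lemma~\ref{basis size} become tight enough to preclude every collision — in particular proving $j_0 \in \{0,1\}$ and $c_0 \le \frac{p-1}{2}$, and making the several inequalities in the last step interlock. The boundary cases $n \equiv -1 \pmod p$ (where $c_0 = 0$ and $i_0 = m$) and $m = \frac{p-1}{2}$ should be singled out and checked directly; they are routine but are where the estimates are sharpest.
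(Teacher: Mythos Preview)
Your proposal is correct and follows essentially the same strategy as the paper: isolate the leading monomial of $\mathcal C_X(\alpha)$ (the term with maximal $y$-degree $i_0$, which is unique since for each $i$ at most one $j\in[0,m-i]$ satisfies the congruence) and show it cannot occur in $\mathcal C_X(\beta)$ for any $\beta<\alpha$. The only real difference is in the endgame: the paper shows that a collision would force $j\ge 2$ and then observes that the shifted pair $(i+1,j-2)$ is again admissible, contradicting maximality of $i$, whereas you first prove $j_0\in\{0,1\}$ directly and then rule out the single surviving case $j_0=1,\ j'=0$ by explicit range estimates on $n,n'$---a slightly more computational but equally valid route.
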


\begin{proof}
    Fix $\alpha = y^mx^ndx$ with $m\geq \frac{p-1}{2}$. We first show that $\mathcal C_X(\alpha) \neq 0$. By Lemma \ref{binomial formula}, $\alpha$ can be expanded using the formula in \eqref{exponents2}. A summand in \eqref{exponents2} corresponds to a pair $(i,j)$ with $0\leq i \leq m$ and $0 \leq j \leq m-i$. The $y$ exponent of such a summand is $pi$ and thus varies only with $i$. The $x$ exponent of a summand is given by $(m-i)d -j(\frac{d}{2}-p)+n$. Note then, that for a fixed $i$, the $x$ exponent varies uniquely depending on $j$ since $\frac{d}{2}-p \neq 0$. Hence each summand has a unique combination of exponents and does not combine with any other summand. Let 
    \begin{equation*}
    I_m := \{(i,j) \in \mathbb Z_{\geq 0}: m-\frac{p-1}{2} \leq i \leq m, 0 \leq j \leq 1\} - \{(m,1)\}.
    \end{equation*}
    Every $(i,j) \in I_m$ gives $(m-i)d - j (\frac d 2 - p) +n$ a distinct class (mod $p$). Hence there is always a summand from \eqref{exponents2} with exponent of the $x$ term congruent to $ - 1 \pmod p$. Hence every basis element $y^mx^ndx$ with $m\geq \frac{p-1}{2}$ has $\mathcal C_X(y^mx^ndx) \neq 0$.
    
    We now show that there is a term in the expansion of $\mathcal C_X(\alpha)$ that does not appear in $\text{Span}(\mathcal C_X(\mathcal B_\alpha))$. Choose $0\leq i \leq m$ to be the largest integer such that there exists $0 \leq j \leq m-i$ with $(m-i)d - j (\frac d 2 - p) +n \equiv -1 \pmod p$. Choose $j$ to be as large as possible. This exists since $\mathcal C_X(\alpha)$ is nonzero. Let $\beta \in \mathcal B_\alpha$ with $\beta = y^kx^ldx$, so either $k < m$ or $k=m$ and $l <n$. 
    Assume by way of contradiction that the coefficient of $y^{pi}x^{(m-i)d - j (\frac d 2 - p) +n}dx$ in the expanded form of $\beta$ is nonzero. This implies that $0 \leq i \leq k$ and there exists $0\leq g \leq k-i$ and $(k-i)d-g (\frac d 2 -p) +l = (m-i)d - j (\frac d 2 - p) +n$.
    Solving for $j$ gives $j(\frac d 2 - p)  = d(m-k)+g(\frac d 2 - p)+n-l$.
    Assume $k=m$ and $l<n$, then using \eqref{basis} we get bounds
    \begin{align*}
        0\leq n,l & \leq p^2-\left(1+\frac {p-1}{2}\right)p-2 = \frac {p^2-p}{2}-2,
    \end{align*}
    which implies $n-l\leq \frac {p^2-1}{2}-2$. However since $j$ is an integer, $(\frac d 2-p)$ must divide $n-l$. This implies $n-l=0$, contradicting $l<n$. 
    Now assume $k<m$. Note that $j = (d(m-k)+g(\frac{d}{2}-p)+n-l)/(\frac d 2 - p) \geq (d(m-k)-l)/(\frac{d}{2}-p)$. If $k<\frac{p-1}{2}$, then $m-k\geq 2$. If $k\geq \frac{p-1}{2}$, then from \eqref{basis} and Lemma \ref{basis size}, we get $l \leq \frac{p^2-3}{2}$. In either case we have $j\geq (d(m-k)-l)/(\frac{d}{2}-p)>1$.
    Now observe that $(m-(i+1))d - (j-2) (\frac d 2 - p) +n\equiv -1 \pmod p$ and $0 \leq i+1\leq m$ and $0 \leq j-2 \leq m - (i+1)$. Hence the maximality of $i$ is violated, since $(i+1)$ also satisfies the necessary condition. By way of contradiction, the coefficient of $y^{pi}x^{(m-i)d - j (\frac d 2-p) +n}dx$ is $0$ in the expanded form for any differential in $\mathcal B_\alpha$. Hence for all $\alpha = y^mx^ndx \in \mathcal B_X$ with $m \geq \frac {p-1}{2}$, $\mathcal C_X(\alpha) \not\in \text{Span}(\mathcal C_X(\mathcal B_\alpha))$.
\end{proof}

\begin{proposition}\label{bc2}
    Let $p$ be an odd prime. If $d=p^2 + 1$, the Artin-Schreier cover 
    $X\to \mathbb P^1$ defined by $y^p-y=-x^{d}-x^{d/2+p}$ has a-number equal to $L(d)$.
\end{proposition}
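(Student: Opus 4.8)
The plan is to pair the lower bound $a_X\ge L(d)$ coming from \eqref{initiallb} with a matching upper bound obtained by producing enough linearly independent elements in the image of $\mathcal C_X$. Since $a_X=\dim\ker\mathcal C_X=g-\operatorname{rank}\mathcal C_X$ with $g=\dim H^0(X,\Omega^1_X)=\#\mathcal B_X$, it is enough to show $\operatorname{rank}\mathcal C_X\ge g-L(d)$. First I would record the genus: by \eqref{basis} and the second identity of Lemma \ref{basis size}, for a fixed $0\le i\le p-2$ there are $p^2-(1+i)p$ admissible values of $j$, so
\[
g=\sum_{i=0}^{p-2}\bigl(p^2-(1+i)p\bigr)=\frac{p^2(p-1)}{2},
\]
and combining with $L(p^2+1)=\tfrac{p-1}{2}\cdot\tfrac{p^2-1}{2}$ from Lemma \ref{simplified lower bounds} gives the target value $g-L(d)=\tfrac{(p-1)(p^2+1)}{4}$.

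Next I would split $\mathcal B_X=\mathcal B_X^{\mathrm{lo}}\sqcup\mathcal B_X^{\mathrm{hi}}$ according to whether the $y$-exponent $m$ of a basis vector satisfies $m<\tfrac{p-1}{2}$ or $m\ge\tfrac{p-1}{2}$. Because the lexicographic order has $y>x$, every element of $\mathcal B_X^{\mathrm{lo}}$ precedes every element of $\mathcal B_X^{\mathrm{hi}}$, so for $\alpha\in\mathcal B_X^{\mathrm{hi}}$ we have $\mathcal B_\alpha=\mathcal B_X^{\mathrm{lo}}\cup\{\beta\in\mathcal B_X^{\mathrm{hi}}:\beta<\alpha\}$. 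Lemma \ref{plusmatrixpart1} supplies, for each pair $(k,l)$ with $0\le k<\tfrac{p-1}{2}$ and $0\le l\le\tfrac{d(p/2-1-k)}{p}$, an element of $\mathcal B_X^{\mathrm{lo}}$ whose Cartier image has leading term $y^kx^ldx$; one checks via \eqref{basis} and Lemma \ref{basis size} that each such $y^kx^ldx$ genuinely lies in $\mathcal B_X$, so these Cartier images have pairwise distinct leading monomials and are therefore linearly independent. Hence $\dim\text{Span}(\mathcal C_X(\mathcal B_X^{\mathrm{lo}}))\ge N_1$, where $N_1$ is the number of such pairs. A short floor computation (using $\lfloor\tfrac{(p^2+1)(p/2-1-k)}{p}\rfloor=\tfrac{p^2-1}{2}-p(1+k)$ for $0\le k<\tfrac{p-1}{2}$) gives $N_1=\tfrac{(p-1)(p^2+1)}{4}-\tfrac{p(p^2-1)}{8}$.

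Then I would process the elements of $\mathcal B_X^{\mathrm{hi}}$ in increasing lexicographic order. By Lemma \ref{plusmatrixpart2}, each such $\alpha$ satisfies $\mathcal C_X(\alpha)\notin\text{Span}(\mathcal C_X(\mathcal B_\alpha))$, and since $\mathcal B_\alpha$ contains all of $\mathcal B_X^{\mathrm{lo}}$ together with every previously processed high element, each $\alpha\in\mathcal B_X^{\mathrm{hi}}$ contributes one new dimension on top of $\text{Span}(\mathcal C_X(\mathcal B_X^{\mathrm{lo}}))$. Therefore $\operatorname{rank}\mathcal C_X\ge N_1+N_2$ with $N_2=\#\mathcal B_X^{\mathrm{hi}}=\sum_{m=(p-1)/2}^{p-2}p(p-1-m)=\tfrac{p(p^2-1)}{8}$. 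The two counts add to $N_1+N_2=\tfrac{(p-1)(p^2+1)}{4}=g-L(d)$, so $\operatorname{rank}\mathcal C_X\ge g-L(d)$, i.e. $a_X\le L(d)$; together with \eqref{initiallb} this yields $a_X=L(d)$.

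The genuinely hard analysis has already been isolated in Lemmas \ref{plusmatrixpart1} and \ref{plusmatrixpart2}, so the remaining obstacle is bookkeeping rather than a new idea: one must verify that the leading monomials produced by Lemma \ref{plusmatrixpart1} actually belong to $\mathcal B_X$ (so that the triangularity/leading-term independence argument is valid), that the low-$m$ and high-$m$ contributions genuinely add rather than overlap — which is exactly what the ``span of everything below $\alpha$'' formulation in Lemma \ref{plusmatrixpart2} guarantees — and that the floor-function counts telescope so that the spurious $\tfrac{p(p^2-1)}{8}$ terms cancel, leaving precisely $g-L(d)$.
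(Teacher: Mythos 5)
Your proposal is correct and follows essentially the same route as the paper's proof: both use rank--nullity, apply Lemma \ref{plusmatrixpart1} to get linearly independent Cartier images with distinct leading terms from the basis elements with $y$-exponent $m<\tfrac{p-1}{2}$, and then use Lemma \ref{plusmatrixpart2} to add one dimension for each basis element with $m\ge\tfrac{p-1}{2}$, with the counts summing to $g-L(d)$. The only cosmetic difference is that you evaluate the two counts $N_1$ and $N_2$ in closed form before adding, while the paper cancels the high-$m$ count against part of the genus sum symbolically; the arithmetic agrees either way.
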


\begin{proof}
    An upperbound for the $a$-number can be found by computing a lower bound for the dimension of the image of the Cartier operator by rank-nullity. Fix differential $\alpha = y^{\frac {p-1}{2}}dx$.
    By Lemma \ref{plusmatrixpart1}, there are $\sum_{k=0}^{(p-3)/2} \left\lfloor \frac{d(p/2-1-k)}{p}+1 \right\rfloor$ many $\omega \in \mathcal B_X$ such that $\omega < \alpha$ and $\mathcal C_X(\omega)$ have distinct largest terms. 
    Thus the dimension of $\text{Span}(\mathcal C_X(\mathcal B_{\alpha}))$ is bounded from below by $\sum_{k=0}^{(p-3)/2} \left\lfloor \frac{d(p/2-1-k)}{p}+1\right\rfloor$. 
    Using Lemma \ref{plusmatrixpart2}, we get a lower bound for the dimension of the image of the Cartier operator given by $\dim(\text{Span}(\mathcal C_X(\mathcal B_X))) \geq \dim(\text{Span}(\mathcal C_X(\mathcal B_\alpha))) +|\{\omega \in \mathcal B_X: \omega \geq y^{\frac {p-1}{2}}dx\}|$. We can then substitute in the lower bound for $\text{Span}(\mathcal C_X(\mathcal B_\alpha))$ and use Lemma \ref{basis size} to count the number of basis elements in $\{\omega \in \mathcal B_X: \omega \geq y^{\frac {p-1}{2}}dx\}$.
    Denote this lower bound as $R(X)$:
    \begin{equation}\label{carterdecompplus}
        R(X) = \sum_{k=0}^{(p-3)/2} \left\lfloor \frac{d(\frac{p-2}{2}-1-k)}{p}+1 \right\rfloor + \sum_{k=(p-1)/2}^{p-2} (p^2-(1+k)p). 
    \end{equation}
    An upper bound for the $a$-number can be found by subtracting this lower bound from the genus.
    Note that using \eqref{basis} and Lemma \ref{basis size}, the genus can be written as $g_X=\sum_{k=0}^{p-2}(p^2-(1+k)p)$. Hence we can write the upper bound for $a(X)$ as
    \begin{equation*}
        a(X) \leq \ \  \left(\sum_{k=0}^{p-2} \ \ (p^2-(1+k)p)\right) - R(X).
    \end{equation*}
    The second sum in \eqref{carterdecompplus} cancels with the sum from the genus to give a sum with the same indices as the first sum in \eqref{carterdecompplus}. This can be further simplified by recalling $d = p^2+1$ to get
    \begin{align*}
         a(X) & \leq \sum_{k=0}^{(p-3)/2} \left(p^2-(1+k)p - \left\lfloor \frac{d(p/2-1-k)}{p} +1\right\rfloor\right)\\
         & \leq \sum_{k=0}^{(p-3)/2} p^2-(1+k)p -
         \left\lfloor \frac{(p^2+1)(p/2-1-k)}{p}+1\right\rfloor\\
         & \leq \sum_{k=0}^{(p-3)/2} p^2-(1+k)p -
         \left\lfloor \frac{p^2+1}{2}-(1+k)p+\frac{-1-k}{p}+1\right\rfloor\\
         & \leq \sum_{k=0}^{(p-3)/2} \frac{p^2-1}{2} -
         \left\lfloor \frac{-1-k}{p}+1\right\rfloor
    \end{align*}
    Since $0 \leq k \leq \frac{p-3}{2}$, we get that $\left\lfloor \frac{-1-k}{p}+1\right\rfloor = 0$. Thus simplifying gives 
    \begin{equation*}
        a(X) \leq \sum_{k=0}^{(p-3)/2} \frac{p^2-1}{2} = \left(\frac{p-1}{2}\right) \frac{p^2-1}{2} = L(d).
    \end{equation*}
    Since $a(X) \geq L(d)$ \cite[Theorem 1.1]{boohercais}, we get $a(X)=L(d).$
\end{proof}

The case for $d=p^2-1$ is proven with a similar approach to the case for $d=p^2+1$.

\begin{lemma}\label{matrixpart1}
    Let $p$ be an odd prime and $d=p^2-1$. Let $X \to \mathbb P^1$ be the Artin-Schreier cover defined by the equation $y^p-y=-x^{d}-x^{d/2}$. Let $0 \leq k < \frac{p-1}{2}$ and $0 \leq l \leq \frac{d(p/2-1-k)-p}{p}$.
    Then there is $y^mx^ndx \in \mathcal B_X$ with $m < \frac{p-1}{2}$ such that the largest term of $\mathcal C_X(y^mx^ndx)$ is $y^kx^ldx$.
\end{lemma}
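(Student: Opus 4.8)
The plan is to follow the proof of Lemma~\ref{plusmatrixpart1} almost verbatim, adjusting only the combinatorics forced by the new defining equation $y^p-y=-x^d-x^{d/2}$. As there, the Cartier relations \eqref{cartier properties} give $\mathcal C_X(y^{pk}x^{lp+(p-1)}dx)=y^kx^ldx$, so it suffices to exhibit $y^mx^ndx\in\mathcal B_X$ with $m<\frac{p-1}{2}$ whose expansion under Lemma~\ref{binomial formula}, with second exponent $e=d/2$ and both coefficients equal to $1$ (the hypotheses of that lemma hold since $d$ and $d/2$ are distinct and nonzero mod $p$),
\begin{equation*}
    y^mx^ndx=\sum_{i=0}^{m}\sum_{j=0}^{m-i}\binom{m}{i}\binom{m-i}{j}y^{pi}x^{(m-i)d-j(d/2)+n}dx,
\end{equation*}
contains the monomial $y^{pk}x^{lp+(p-1)}dx$ with nonzero coefficient, and for which that monomial is the largest, in the lexicographic order with $y>x$, among those summands whose $x$-exponent is $\equiv-1\pmod p$; applying $\mathcal C_X$ then yields $y^kx^ldx$ as the largest term. (The hypotheses on $k,l$ also ensure $y^kx^ldx\in\mathcal B_X$, via Lemma~\ref{basis size}.)

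To make the combinatorial choices, put $i=k$ and $m=a+k$; then the requirement $(m-k)d-j(d/2)+n=lp+(p-1)$ becomes $n=lp+(p-1)-b$ with $b:=ad-j(d/2)$, so $j=\tfrac{2(ad-b)}{d}$. Exactly as in Lemma~\ref{plusmatrixpart1} I would take $a$ to be the appropriate floor (the analogue of $\big\lfloor(2lp+p^2-1)/(2(p^2+1))\big\rfloor$), arranged so that $0\le a$ and $a+k\le\frac{p-3}{2}$, and then choose $b\in\{0,\ ad,\ (a-1)d+d/2\}$---equivalently $j\in\{0,1\}$---by a case split according to which value makes $n=lp+(p-1)-b$ satisfy $0\le n\le\big\lceil\tfrac{(p-1-m)d}{p}\big\rceil-2=p^2-(1+m)p-2$, the last equality being Lemma~\ref{basis size}. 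With these choices $y^mx^ndx\in\mathcal B_X$ and $m=a+k<\frac{p-1}{2}$; the coefficient $\binom{a+k}{k}\binom{a}{j}$ of the $(k,j)$-summand is nonzero mod $p$ since $a+k<p$; and as the $y$-exponent forces $i=k$ and then the $x$-exponent forces a unique $j$, no other summand contributes the monomial $y^{pk}x^{lp+(p-1)}dx$.

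It then remains to show that $i=k$ is the \emph{largest} index admitting some $0\le j\le m-i$ with $(m-i)d-j(d/2)+n\equiv-1\pmod p$, and that our $j$ is the \emph{smallest} such when $i=k$; since $\mathcal C_X(y^{pi}x^fdx)\ne0$ iff $f\equiv-1\pmod p$ and, for fixed $i$, the $x$-exponent strictly decreases in $j$, this is exactly the statement that $y^kx^ldx$ is the largest term of $\mathcal C_X(y^mx^ndx)$. Writing $i=k+c$ and reducing mod $p$ using $d\equiv-1$ and $d/2\equiv\tfrac{p-1}{2}\pmod p$ (the latter because $2\cdot\tfrac{p+1}{2}\equiv1$), the congruence collapses---in the three cases $b=0$, $b=ad$, $b=(a-1)d+d/2$---to $0\equiv0$, $j\equiv-2c\pmod p$, and $j\equiv1-2c\pmod p$, which are the same congruences that appear in Lemma~\ref{plusmatrixpart1}. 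In each case the bounds $0\le c\le a$, $0\le j\le a-c$, and $a\le\tfrac{p-3}{2}$ force $c=0$: if $c\ge1$, then $j+2c$ would have to equal $p$ or $p+1$, contradicting $j+2c\le2a\le p-3$. Hence $i=k$ and $j$ is pinned to our choice.

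I expect the only genuine work to be the basis-membership check in the middle step: pinning down the floor defining $a$ and the case split for $b$ so that $n$ lands in $\big[0,\ p^2-(1+m)p-2\big]$ for every admissible $(k,l)$ and all odd $p$. This is where the precise hypothesis $l\le\frac{d(p/2-1-k)-p}{p}$ is used---the extra $-p$, absent from the $d=p^2+1$ case, reflecting the smaller basis bound in Lemma~\ref{basis size} for $d=p^2-1$.
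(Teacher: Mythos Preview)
Your proposal is correct and takes essentially the same approach as the paper. The paper resolves the floor you left unspecified by counting in half-$d$ steps rather than full-$d$ steps: it sets $a=\big\lfloor\tfrac{2lp+2(p-1)}{d}\big\rfloor$, $m=\big\lceil\tfrac a2\big\rceil+k$, $n=lp+(p-1)-a\tfrac d2$, and $j=2\big\lceil\tfrac a2\big\rceil-a\in\{0,1\}$, which is exactly your parameterization with the case split on $b$ absorbed into the parity of $a$, and leads to the same congruences $j\equiv -2c$, $j\equiv 1-2c$ and the same maximality argument you gave.
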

\begin{proof}
    The proof of Lemma \ref{matrixpart1} is similar to the proof of Lemma \ref{plusmatrixpart1} except we set $a=\left\lfloor\frac{2lp+2(p-1)}{d}\right\rfloor$, $m=\left\lfloor\frac{a}{2}\right\rfloor+k$, and $n=lp+(p-1)-a\frac{d}2$ to get the desired $y^mx^ndx$. In this case, expanding $y^mx^ndx$ using Lemma \ref{binomial formula} gives 
    \begin{equation} \label{exponentsminus}
    y^mx^ndx  = \sum_{i=0}^m\sum_{j=0}^{m-i} {m\choose i}{m-i\choose j} y^{pi}x^{(m-i)d-j\frac{d}{2}+n}dx
    \end{equation}
    and choosing $i=k$ and $j=2\lceil\frac{a}{2}\rceil-a$ will give the term $y^{pk}x^{lp+(p-1)}dx$ as needed.
\end{proof}

\begin{lemma}\label{matrixpart2}
Let $p$ be an odd prime and $d:=p^2-1$. Let $X \to \mathbb P^1$ be the Artin-Schreier cover defined by the equation $y^p-y=-x^{d}-x^{d/2}$. If $\alpha = y^mx^ndx \in \mathcal B_X$ has $m \geq \frac {p-1}{2}$, then $\mathcal C_X(\alpha) \not\in \emph{\text{Span}}(\mathcal C_X(\mathcal B_\alpha))$.
\end{lemma}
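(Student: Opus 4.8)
The plan is to follow the proof of Lemma~\ref{plusmatrixpart2}, now using the expansion \eqref{exponentsminus} in place of \eqref{exponents2}: for the curve $y^p-y=-x^d-x^{d/2}$ the differential $\alpha=y^mx^ndx$ is a sum of monomials $y^{pi}x^{(m-i)d-j\frac{d}{2}+n}dx$ over $0\le i\le m$, $0\le j\le m-i$, with coefficient $\binom{m}{i}\binom{m-i}{j}$. Since every basis element has $m\le p-2$ this coefficient is a unit in $k$, and since $\frac{d}{2}\neq 0$ distinct pairs $(i,j)$ yield distinct monomials; hence no cancellation takes place in \eqref{exponentsminus}.

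First I would show $\mathcal C_X(\alpha)\neq 0$ whenever $m\ge\frac{p-1}{2}$, by the same device as in Lemma~\ref{plusmatrixpart2}: put $I_m:=\{(i,j)\in\mathbb Z_{\geq 0}^{2}:\ m-\frac{p-1}{2}\le i\le m,\ 0\le j\le 1\}\setminus\{(m,1)\}$, which consists of indices appearing in \eqref{exponentsminus} because $m\ge\frac{p-1}{2}$ gives $i\ge 0$ and $(m,1)$ is the unique pair in the box violating $j\le m-i$. Using $d\equiv -1$ and $\frac{d}{2}\equiv\frac{p-1}{2}\pmod p$, a direct computation shows that the $x$-degrees $(m-i)d-j\frac{d}{2}+n$ run through every residue class modulo $p$ exactly once as $(i,j)$ ranges over $I_m$; in particular one of them is $\equiv -1\pmod p$, so $\mathcal C_X(\alpha)\neq 0$ by \eqref{cartier properties} and the no-cancellation remark.

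Next I would exhibit a monomial of $\mathcal C_X(\alpha)$ outside $\text{Span}(\mathcal C_X(\mathcal B_\alpha))$. Choose the largest $i\le m$ admitting some $j$ with $0\le j\le m-i$ and $E:=(m-i)d-j\frac{d}{2}+n\equiv -1\pmod p$ (this $j$ is unique, since valid $j$'s for a fixed $i$ differ by multiples of $p$ while $0\le j\le m-i<p$). Suppose for contradiction that $y^{pi}x^{E}dx$ occurs with nonzero coefficient in the expansion of some $\beta=y^kx^ldx\in\mathcal B_\alpha$; then $i\le k$ and there is $0\le g\le k-i$ with $(k-i)d-g\frac{d}{2}+l=E$, i.e.\ $j\frac{d}{2}=d(m-k)+g\frac{d}{2}+n-l$. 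If $k=m$ (so $l<n$) this gives $(j-g)\frac{d}{2}=n-l$, which is impossible because Lemma~\ref{basis size} and $m\ge\frac{p-1}{2}$ force $0<n-l\le\frac{p^2-p}{2}-2<\frac{d}{2}$. If $k<m$, the relation forces $\frac{d}{2}\mid n-l$, say $n-l=t\frac{d}{2}$, and hence $j=2(m-k)+g+t$; once we know $j>1$, the pair $(i+1,j-2)$ is an index of \eqref{exponentsminus} with $(m-(i+1))d-(j-2)\frac{d}{2}+n=E\equiv -1\pmod p$, contradicting the maximality of $i$. So no such $\beta$ exists and $\mathcal C_X(\alpha)\notin\text{Span}(\mathcal C_X(\mathcal B_\alpha))$.

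The main obstacle is establishing $j>1$ in the case $k<m$. When $k\ge\frac{p-1}{2}$ (where already $m-k\ge 1$ suffices) or when $m-k\ge 2$, the crude estimate $j\ge\big(d(m-k)-l\big)/\tfrac{d}{2}$ combined with the basis bound $l\le p^2-(1+k)p-2$ from Lemma~\ref{basis size} does the job, just as in Lemma~\ref{plusmatrixpart2}. The genuinely delicate case, which has no exact analogue in the plus case, is $m-k=1$ together with $k<\frac{p-1}{2}$, which forces $k=\frac{p-3}{2}$ and $m=\frac{p-1}{2}$ and where the crude estimate only yields $j\ge 1$. Here I would instead argue from $j=2(m-k)+g+t=2+g+t$ with $g\ge 0$ and $|t|\le 1$ (the latter because $|n-l|\le\frac{p^2+p}{2}-2<d$): thus $j=1$ can occur only if $g=0$ and $t=-1$, in which case the equation $(k-i)d+l=E$ forces $l=\frac{d}{2}+n$; substituting this into $E\equiv -1\pmod p$ and using $d\equiv -1$, $\frac{d}{2}\equiv\frac{p-1}{2}\pmod p$ forces $n\ge\frac{p-1}{2}$, contradicting the bound $n\le\frac{p^2+p}{2}-2-\frac{d}{2}=\frac{p-3}{2}$ that $l=\frac{d}{2}+n$ and Lemma~\ref{basis size} impose at $k=\frac{p-3}{2}$. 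This rules out $j=1$, so $j>1$ in every case and the proof is complete.
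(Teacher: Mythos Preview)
Your proof is correct and follows essentially the same approach as the paper's: show $\mathcal C_X(\alpha)\ne 0$ via the set $I_m$, pick the term of $\mathcal C_X(\alpha)$ with maximal $i$, and for any $\beta=y^kx^ldx\in\mathcal B_\alpha$ derive $j=2(m-k)+g+\tfrac{2(n-l)}{d}\ge 2$ to contradict maximality of $i$, treating the boundary case $m=\tfrac{p-1}{2}$, $k=\tfrac{p-3}{2}$ by combining the congruence $E\equiv -1\pmod p$ (which, since $i\le k<m$, forces $n\ge\tfrac{p-1}{2}$) with the bound $n\le\tfrac{p-3}{2}$ coming from $l=n+\tfrac{d}{2}$. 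Your observation that the relevant $j$ is in fact unique (rather than merely ``maximal'') is a nice simplification, and your explicit isolation of the delicate subcase, absent in the plus case, makes the argument cleaner than the paper's sketch.
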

\begin{proof}
    The proof of Lemma \ref{matrixpart2} is similar to the proof of Lemma \ref{plusmatrixpart2} with the formula \eqref{exponentsminus} replacing formula \eqref{exponents2}
\end{proof}
    
\begin{proposition}\label{bctheorem}
    Let $p$ be an odd prime. If $d=p^2 - 1$, the Artin-Schreier cover 
    $X\to \mathbb P^1$ defined by $y^p-y=-x^{d}-x^{d/2}$ has a-number equal to $L(d)$.
\end{proposition}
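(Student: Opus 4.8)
The plan is to run the argument of Proposition \ref{bc2} essentially verbatim, replacing the inputs for $d = p^2+1$ by their $d = p^2-1$ counterparts: Lemma \ref{matrixpart1} in place of Lemma \ref{plusmatrixpart1}, Lemma \ref{matrixpart2} in place of Lemma \ref{plusmatrixpart2}, the $d=p^2-1$ row counts of Lemma \ref{basis size}, and the value $L(p^2-1) = \left(\frac{p-1}{2}\right)\frac{p^2-1}{2}$ from Lemma \ref{simplified lower bounds}. The strategy is to bound $a(X)$ from above by $g_X$ minus a lower bound on the dimension of the image of $\mathcal C_X$ via rank--nullity, and then compare with the Booher--Cais lower bound $a(X) \geq L(d)$ \cite[Theorem 1.1]{boohercais} to force equality.

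Concretely, first fix $\alpha = y^{(p-1)/2}dx \in \mathcal B_X$. By Lemma \ref{matrixpart1}, for each pair $(k,l)$ with $0 \le k < \frac{p-1}{2}$ and $0 \le l \le \frac{d(p/2-1-k)-p}{p}$ there is a basis element $\omega < \alpha$ with $\mathcal C_X(\omega)$ having distinct leading term $y^k x^l dx$, so
\[
\dim \text{Span}(\mathcal C_X(\mathcal B_\alpha)) \ \geq\ \sum_{k=0}^{(p-3)/2}\left(\left\lfloor\frac{d(p/2-1-k)-p}{p}\right\rfloor + 1\right).
\]
By Lemma \ref{matrixpart2}, every basis element $\omega \geq \alpha$ contributes a fresh independent vector to the image, so $\dim \text{Span}(\mathcal C_X(\mathcal B_X))$ is at least the sum above plus $|\{\omega\in\mathcal B_X : \omega \geq \alpha\}|$; counting the latter with \eqref{basis} and the $d = p^2-1$ case of Lemma \ref{basis size} (which gives $p^2-(1+i)p-1$ admissible $j$ in row $i$) yields an explicit lower bound $R(X)$ for the rank, the analogue of \eqref{carterdecompplus}.

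Then $a(X) \leq g_X - R(X)$, where $g_X = \sum_{k=0}^{p-2}(p^2-(1+k)p-1)$ by \eqref{basis} and Lemma \ref{basis size}. The tail $\sum_{k=(p-1)/2}^{p-2}(p^2-(1+k)p-1)$ inside $R(X)$ cancels the corresponding part of $g_X$, leaving
\[
a(X) \ \leq\ \sum_{k=0}^{(p-3)/2}\left(p^2-(1+k)p-1 - \left\lfloor\frac{(p^2-1)(p/2-1-k)-p}{p}+1\right\rfloor\right).
\]
One checks $\frac{(p^2-1)(p/2-1-k)-p}{p}+1 = \frac{p^2-1}{2}-(1+k)p+\frac{1+k}{p}$ with $0 < \frac{1+k}{p} < 1$ for $0 \le k \le \frac{p-3}{2}$, so the floor equals $\frac{p^2-1}{2}-(1+k)p$, each summand collapses to $\frac{p^2-1}{2}$, and $a(X) \leq \left(\frac{p-1}{2}\right)\frac{p^2-1}{2} = L(p^2-1)$ by Lemma \ref{simplified lower bounds}. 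Combined with $a(X) \geq L(d)$ this gives $a(X) = L(d)$.

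The main obstacle is the same routine-but-delicate bookkeeping as in Proposition \ref{bc2}: verifying that the index ranges appearing in $R(X)$ and in $g_X$ line up exactly so the cancellation is clean, and carefully evaluating the floor term after substituting $d = p^2-1$. Here the extra $-p$ in the bound on $l$ coming from Lemma \ref{matrixpart1} and the extra $-1$ in the row counts coming from Lemma \ref{basis size} conspire so that the final bound is still $\left(\frac{p-1}{2}\right)\frac{p^2-1}{2}$, matching $L(p^2-1)=L(p^2+1)$; everything else transfers from the $d=p^2+1$ case without change.
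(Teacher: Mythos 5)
Your proposal is correct and follows exactly the route the paper intends: the paper's proof of Proposition \ref{bctheorem} is stated as a transposition of the argument for Proposition \ref{bc2}, with Lemmas \ref{matrixpart1} and \ref{matrixpart2} replacing Lemmas \ref{plusmatrixpart1} and \ref{plusmatrixpart2}, and your bookkeeping (the row counts $p^2-(1+k)p-1$, the cancellation of the tail of $R(X)$ against the genus, and the evaluation of the floor to $\tfrac{p^2-1}{2}-(1+k)p$) matches the intended computation. No changes needed.
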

\begin{proof}
    The proof of Proposition \ref{bctheorem} is similar to the proof of Proposition \ref{bc2}, with Lemmas \ref{matrixpart1} and \ref{matrixpart2} playing the role of Lemmas \ref{plusmatrixpart1} and \ref{plusmatrixpart2}.
\end{proof}

\begin{example}
    The Artin-Schreier covers $X \to \mathbb P^1$ defined by $y^{11}-y=-x^{122}-x^{72}$ and $Y\to \mathbb P^1$ defined by $y^{11}-y=-x^{120}-x^{60}$ both have the $a$-number $a(X)=a(Y)=300$, which is equal to the lower bound, $L(120)=L(122)=300$.
\end{example}

\begin{remark}\label{experiment}
For $0\leq n \leq 7$ and for odd prime $p\leq 13$, the Artin-Schreier curves defined by the equation $y^p-y=-x^{np^2-1}-x^{\frac{np^2+(n-1)p-1}{2}}$ have $a$-number equal to the lower bound $L(np^2-1)$. We conjecture that this holds for all $n \in \mathbb N$ and all odd primes $p$. A technique similar to the ones presented in Propositions \ref{bctheorem} and \ref{bc2} might be able to show this, but we present a conceptual approach to generating an infinite family with $a$-number equal to the lower bound in Section \ref{infinite family}.
\end{remark}

\section{An Infinite Family of Curves via Patching}\label{infinite family}
\subsection{Notation}

Notation \ref{first notation} will be used all throughout Section \ref{infinite family}.

\begin{notation} \label{first notation}
Let $p$ be an odd prime $k$ be an algebraically closed field with characteristic $p$. Let $R = k[[t]]$ be the ring of formal power series over $k$ and $K= k((t))$ be the field of fractions of $R$. Set $U=\text{Spec}(k[[u]])$ and  $V=\text{Spec}(k[[v]])$. For a positive integer $e$, set $\Omega_{uv}^e=k[[u,v,t]]/(uv-t^e)$ and let $S^e_{uv} = \text{Spec}(\Omega_{uv}^e)$. 
A relative curve (or $R$-curve) is a flat finitely presented morphism $f:X \to \text{Spec}(R)$ of relative dimension 1. Denote $X_K$ for the generic fiber and $X_k$ for the special fiber of $X$. For a point $u$ on $X$, the \textit{germ} $\hat X_u$ of the curve $X$ at $u$ is the spectrum of the complete local ring of functions of $X$ at $u$ and $\hat{O}_{X,u}$ is the ring of germs of regular functions at $u$.
Let $P_R^e$ be an $R$-curve whose generic fiber is isomorphic to $\mathbb P_k^1$ and whose special fiber consists of projective lines $P_u$ and $P_v$ meeting transversally at a point $b$ where $(u,v)= (\infty, \infty)$ and $\hat {P}_{R,b} \simeq S^e_{uv}$.
\end{notation}

An Artin-Schreier curve can be defined by an equation of the form $y^p-y = f(x)$ where $f(x) \in k(x)$ is nonconstant, and $k(x)$ is the function field of $\mathbb P^1$. We often define a smooth projective curve and its function field by describing its affine parts.  A morphism of curves is a $cover$ if it is finite and generically separable. Let $\phi: Y \to X$ be a $\Zp$-cover of curves branched at the point $u$. Suppose $\eta \in \phi^{-1}(u)$, then the \textit{ramification break} of $\phi$ at $\eta$ is the integer $d=\text{val}(q(\pi_\eta)-\pi_\eta)$, where $q$ is a generator of $\Zp$ and $\pi_\eta$ is a uniformizer of $Y$ at $\eta$.

\subsection{Formal Patching}
Patching is the technique of constructing global covers with certain desired properties by building covers locally and gluing them together. Classically, this was done for varieties over the complex numbers, by building spaces on open sets and gluing them together on the intersections. The main tool behind the classical method of patching was Riemann's Existence Theorem, which showed that this topological construction actually leads to an algebraic variety. Formal patching is a technique that generalizes this approach to work over fields other than the complex numbers using formal geometry. In the case of formal patching, covers are built over formal thickenings and glued on their formal overlap. Grothendieck's Existence Theorem algebraizes this construction. For a brief overview on constructing covers using formal patching see \cite{priesoverview}, and for a more complete introduction to formal patching and its relationship to complex patching see \cite{harbatergalois}.

In this paper, we use formal patching to glue together curves with known ramification breaks and $a$-numbers to form a curve with a larger ramification break and known $a$-number. This section closely follows the ideas presented in \cite{prieswild}. Let $X$ be a connected, reduced, projective $k$-curve and let $\mathbb S$ be a finite closed subset of $\mathbb X$ containing the singular locus of $X$.

\begin{definition}
A \textit{thickening problem} of covers for $(X,\mathbb S)$ consists of the following:
\begin{enumerate}
\item A cover $f:Y\to X$ of geometrically connected reduced projective $k$-curves,
\item For each $s \in \mathbb S$, a Noetharian normal complete local domain $R_s$ with $R \subseteq R_s$ such that $t$ is in the maximal ideal of $R_s$ and a finite generically seperable $R_s$-algebra $A_s$,
\item For each $s \in \mathbb S$, a pair of $k$-algebra isomorphisms $F_s:R_s/(t) \to \hat{O}_{X,s}$ and $E_s:A_s/(t) \to \hat{O}_{Y,s}$ compatible with the inclusion morphisms.
\end{enumerate}
\end{definition}

\begin{definition}
A \textit{thickening} of $X$ is a projective normal $R$-curve $X^*$ such that $X^*_k \simeq X$. We call a thickening problem \textit{$G$-Galois} if $f$ and the inclusion $R_s \subseteq A_s$ are $G$-Galois and $F_s$ is compatible with the $G$-Galois action for all $s\in S$. We call it \textit{relative} if the problem has a thickening $X^*$ of $X$ that is a trivial deformation of $X$ away from $\mathbb S$ such that the pullback of $X^*$ over the complete local ring at a point $s \in \mathbb S$ is isomorphic to $R_s$.
\end{definition}

\begin{definition}
    A \textit{solution} to a thickening problem of covers is a cover $f^*:Y^* \to X^*$ of projective normal $R$-curves, where $X^*$ is a thickening of $X$, whose closed fiber is isomorphic to $f$, whose pullback to the formal completion of $X^*$ along $X'=X-\mathbb S$ is a trivial deformation of the restriction of $f$ over $X'$, and whose pullback over the complete local ring at a point $s\in \mathbb S$ is isomorphic to $R_s \subseteq A_s$ with all isomorphisms being compatible.
\end{definition}

\begin{theorem}\label{harbaterstevenson}
    Every $G$-Galois thickening problem for covers has a $G$-Galois solution. If the thickening problem is relative then the solution is unique.
\end{theorem}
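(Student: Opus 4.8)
*Every $G$-Galois thickening problem for covers has a $G$-Galois solution; if the thickening problem is relative then the solution is unique.*

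The plan is to reduce the theorem to the formal patching / algebraization machinery of Harbater--Stevenson, following the presentation in \cite{prieswild}. First I would set up the formal-geometric data attached to a thickening problem $(f: Y \to X, \{R_s\}, \{A_s\}, \{F_s, E_s\})$: let $\widehat{X}$ be the formal completion of a (to-be-constructed) $R$-curve along its special fiber $X$, and for each $s \in \mathbb S$ let $\widehat{X}_s = \operatorname{Spf}(R_s)$ be the prescribed germ, together with the punctured germ over the complement. The key observation is that $X - \mathbb S =: X'$ is an affine (or at least quasi-affine, after shrinking) curve, so the cover $f$ restricted to $X'$ can be deformed trivially over $R$ to give $f': Y' \times_k R \to X' \times_k R$; this is the ``interior'' piece of the patch. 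Over each $s \in \mathbb S$ we already have the piece $\operatorname{Spec}(A_s) \to \operatorname{Spec}(R_s)$, which is $G$-Galois by hypothesis. The patching step consists of glueing these pieces along the overlaps, i.e. along the punctured formal germs $\operatorname{Spec}(R_s) \setminus \{s\}$ base-changed appropriately, using the isomorphisms $F_s, E_s$ modulo $t$ to identify the special fibers and then invoking a formal glueing lemma (a Ferrand--Raynaud / Beauville--Laszlo type result, or the patching lemma in Harbater's framework) to produce a coherent formal cover $\widehat{f}: \widehat{Y} \to \widehat{X}$ over $\operatorname{Spf}(R)$.

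The next step is algebraization: apply Grothendieck's Existence Theorem (formal GAGA, EGA III) to the projective formal scheme $\widehat{X}$ and the coherent sheaf of algebras defining $\widehat{f}$ to obtain an actual projective $R$-curve $X^*$ with a finite morphism $f^*: Y^* \to X^*$ whose $t$-adic completion recovers $\widehat{f}$. One then checks that $X^*$ is normal (the germs $R_s$ are normal by hypothesis and away from $\mathbb S$ the curve is a trivial deformation of the reduced projective curve $X$, so normality can be verified germ-by-germ), that $X^*_k \cong X$ and that the $G$-action glues (the $G$-action on the interior piece is the trivial deformation of the action on $f$, the $G$-action on each $A_s/R_s$ is given, and the $F_s$ are $G$-compatible mod $t$, so the actions agree on overlaps and algebraize to a $G$-action on $f^*$). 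This yields a $G$-Galois solution. For the uniqueness clause when the problem is relative, I would argue that a relative thickening problem rigidifies the interior: the thickening $X^*$ is forced to be the trivial deformation of $X$ away from $\mathbb S$ and is prescribed by $R_s$ at each $s$, so $X^*$ is unique up to unique isomorphism; then any two solutions $f^*, f^{**}$ have the same base $X^*$, agree on the interior (trivial deformation of $f|_{X'}$) and agree over each $R_s$ (prescribed to be $A_s$), hence agree on an open cover of $\widehat{X}$ and therefore are isomorphic as formal covers; algebraization then transports this to an isomorphism $Y^* \cong Y^{**}$ over $X^*$.

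The main obstacle — and the step that genuinely requires the Harbater--Stevenson theorem rather than a soft argument — is the formal glueing lemma: showing that finite ($G$-Galois) covers of the interior formal scheme and of the formal germs, which are merely required to be compatible on the punctured germs, actually glue to a finite cover of the whole formal curve. This is where the completeness and normality hypotheses on the $R_s$ (and the Noetherian hypothesis) are essential, and where one uses that the overlap loci are spectra of complete rings so that glueing of modules is effective. Everything after that (normality of $X^*$, descent of the $G$-action, and the uniqueness bookkeeping) is essentially formal once the glued formal cover exists and is algebraized. I would therefore structure the proof as: (i) construct the pieces, (ii) glue them formally via the patching lemma, (iii) algebraize via Grothendieck existence, (iv) verify normality, the special fiber, and $G$-equivariance, and (v) deduce uniqueness in the relative case from the rigidity of the interior deformation.
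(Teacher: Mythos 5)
The paper does not prove this statement at all: its ``proof'' is a one-line citation of Theorem~4 of Harbater--Stevenson, so your proposal cannot be compared against an argument in the text. Your outline is a faithful reconstruction of the strategy behind that cited result --- build the trivial deformation over $X-\mathbb S$, take the prescribed covers $\operatorname{Spec}(A_s)\to\operatorname{Spec}(R_s)$ at the points of $\mathbb S$, glue along punctured formal germs using the compatibility isomorphisms $F_s,E_s$, algebraize with Grothendieck's Existence Theorem, and get uniqueness in the relative case from the rigidity of the interior --- and you are right that this is the correct architecture. However, read as a standalone proof it has a genuine gap, which you yourself flag: the formal glueing lemma (that finite $G$-Galois covers of the interior formal scheme and of the formal germs, compatible only on the punctured germs, glue to a finite cover of the whole formal curve) is the actual mathematical content of the theorem, and you name it without establishing it. Establishing it requires a Beauville--Laszlo/Ferrand--Raynaud type descent argument exploiting the completeness of the $R_s$, and without that step the rest of your outline (algebraization, normality, equivariance, uniqueness) is bookkeeping around an unproven core. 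Since the paper treats this as an external black box, deferring to Harbater--Stevenson as you do is acceptable in context, but you should either cite the glueing lemma precisely or prove it if the theorem is meant to be self-contained.
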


\begin{proof}
    \cite[Theorem 4]{harbaterstevenson}.
\end{proof}

\subsection{Patching curves with a specific congruence}\label{patching curves}

\begin{proposition} \label{crossing}
    Following Notation \ref{first notation}, let $\phi_1: X \to U$, $\phi_2: Y\to V$ be $\mathbb Z/p\mathbb Z$-Galois covers of normal connected germs of curves with ramification breaks $j_1$ and $j_2$ such that $j_1+j_2 \equiv 0 \pmod p$. Let $e=j_1+j_2$. Then there exists $\mathbb Z/p\mathbb Z$-Galois cover $\phi_R: W_R \to S^e_{uv}$ of irreducible germs of $R$-curves with the properties:
    \begin{enumerate}
    \item The cover $\phi_R$ is branched at one $R$-point, $b_R$.
    \item After normalization, the pullbacks of the special fiber of $\phi_R$ to U and V are isomorphic to $\phi_1$ and $\phi_2$ away from $b_R$.
    \item The generic fiber $\phi_K: W_K \to S^e_{uv,K}$ of $\phi_R$ is a $\mathbb Z/p\mathbb Z$-Galois cover of normal irreducible germs of curves whose branch locus is $b_K:=b_R \times_R K$.
    \item The cover $\phi_K$ has ramification break $e-1$ over the branch point.
    \end{enumerate}
\end{proposition}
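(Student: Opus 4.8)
The plan is to exhibit $\phi_R$ as an explicit Artin--Schreier cover of $S^e_{uv}$ and to check the four properties by direct computation. First I would normalize the data. The ramification break of a $\Zp$-cover of a germ is prime to $p$, so $p\nmid j_1$ and $p\nmid j_2$; moreover, over an algebraically closed field a totally ramified $\Zp$-cover of a germ is determined up to isomorphism by its break (write the Artin--Schreier equation as $w^p-w=$ a polynomial of degree $d$ in the inverse of a uniformizer, then kill the lower-order coefficients one at a time using coordinate changes $u\mapsto u(1+cu^m)$, which succeed because the leading degree $d$ is prime to $p$). Since property (2) only asks for an isomorphism, I may assume $\phi_1$ is $w^p-w=u^{-j_1}$ and $\phi_2$ is $w^p-w=v^{-j_2}$. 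Now fix a constant $u_0\in k^\times$, to be constrained generically below, put $\alpha=t^{j_2}u_0$ and $\beta=t^{j_1}u_0^{-1}$ in $R$, and note $\alpha\beta=t^{e}$, so that $b_R:=\{u=\alpha,\ v=\beta\}$ is a section of $S^e_{uv}\to\operatorname{Spec}R$ specializing to the node. Define
\[
F \;=\; \frac{u^{j_2}}{(u-\alpha)^{e}} \;+\; \frac{v^{j_1}}{(v-\beta)^{e}} \;\in\; \operatorname{Frac}(\Omega_{uv}^{e}),
\]
and let $\phi_R:W_R\to S^e_{uv}$ be the normalization of $S^e_{uv}$ in the $\Zp$-extension $w^p-w=F$.

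Next I would verify (1)--(3). For (1): from $uv=t^{e}$ one computes $\operatorname{div}(u)=e\,[\{u=0\}]$ and $\operatorname{div}(u-\alpha)=j_2\,[\{u=0\}]+[b_R]$, hence $\operatorname{div}\!\big(u^{j_2}/(u-\alpha)^{e}\big)=-e\,[b_R]$, and symmetrically for the second summand; so $F$ has poles only along the section $b_R$, and after normalization $\phi_R$ is branched exactly at the single $R$-point $b_R$. For (2): restrict $F$ modulo $t$ along the branch $\{v=0\}$, whose local ring at the generic point is a discrete valuation ring with uniformizer $t$ in which $v=t^{e}u^{-1}$; since $\alpha\equiv\beta\equiv 0\pmod t$ this yields $F|_{\{v=0\}}=u^{j_2}/u^{e}+(\text{nonzero constant})\,u^{-j_1}=(\text{nonzero constant})\,u^{-j_1}$, which for generic $u_0$ has break $j_1$ and hence, after rescaling $u$, is isomorphic to $\phi_1$; being unramified away from the node, it agrees with $\phi_1$ away from $b_R$, and symmetrically on $\{u=0\}$ for $\phi_2$. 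For (3): on the generic fiber $u$ and $v$ are units, so $F$ has its unique pole at $b_K=b_R\times_R K$, and since $F$ has a genuine pole it is not of the form $g^p-g$, so $w^p-w=F$ is a degree-$p$ field extension and $W_K$ is a normal irreducible germ with branch locus $\{b_K\}$.

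The hard part is property (4) --- the ramification break of $\phi_K$ at $b_K$. Let $s=u-\alpha$ be a uniformizer at $b_K$. Expanding, $u^{j_2}/(u-\alpha)^{e}=\alpha^{j_2}s^{-e}+j_2\alpha^{j_2-1}s^{-(e-1)}+\cdots$, and using $v-\beta=-(\beta/\alpha)s+O(s^{2})$ the second summand has the form $c\,s^{-e}+c'\,s^{-(e-1)}+\cdots$ with $c'$ a nonzero multiple of $j_2$; thus $F=c_e s^{-e}+c_{e-1}s^{-(e-1)}+\cdots$ with $c_e\neq 0$ and $c_{e-1}$ a nonzero multiple of $j_2$ for all but finitely many $u_0$. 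Here the hypothesis $j_1+j_2\equiv 0\pmod p$ is indispensable: because $p\mid e$, the leading pole is removed by subtracting $\big(c_e^{1/p}s^{-e/p}\big)^{p}-c_e^{1/p}s^{-e/p}$, which leaves a top term $c_{e-1}s^{-(e-1)}$; since $e-1\equiv -1\pmod p$ is prime to $p$ and $c_{e-1}\neq 0$ (using $p\nmid j_2$), no further Artin--Schreier reduction is possible, so the break of $\phi_K$ at $b_K$ equals $e-1$. If $p\nmid e$ the pole of order $e$ would already be reduced and the break would be $e$, not $e-1$ --- this is exactly why the method is confined to breaks $\equiv -1\pmod p$. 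What remains is bookkeeping: the finitely many genericity conditions on $u_0$ --- that $c_e,c_{e-1}\neq 0$ and that the two restrictions in (2) keep breaks $j_1,j_2$ --- are simultaneously achievable since $k$ is infinite; and if one prefers not to invoke uniqueness of germ covers, the same computation goes through for arbitrary $f_1,f_2$ by replacing $u^{j_2}$ with $u^{j_2}g_1(u)$, $g_1(u)=u^{j_1}f_1(u)$ (and similarly in $v$), where only the nonzero constant terms $g_1(0),g_2(0)$ enter the constants above.
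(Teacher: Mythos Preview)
Your construction is essentially the paper's, though you have obscured this by writing $F$ as a symmetric sum. In $\operatorname{Frac}(\Omega^e_{uv})$ one has $v-\beta=-t^e(u-\alpha)/(u\alpha)$, and a short computation gives
\[
\frac{v^{j_1}}{(v-\beta)^e}=(-1)^e u_0^{\,e}\cdot\frac{u^{j_2}}{(u-\alpha)^e},
\]
so your two summands are scalar multiples of each other and $F=(1+(-1)^e u_0^{\,e})\,u^{j_2}/(u-\alpha)^e$. Up to the constant and a sign in $\alpha$, this is exactly the paper's equation $z^p-z=u^{j_2}(u+a_1t^{j_2})^{-e}$, and your verifications of (1)--(3) then run parallel to the paper's.

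Where you genuinely diverge is in (4). The paper argues indirectly: the pole order $e$ gives break $\le e$, the genus/flatness bound of Proposition~\ref{fact} gives break $\ge e-1$, and $p\nmid(\text{break})$ forces equality with $e-1$. You instead perform the Artin--Schreier reduction explicitly, which is more elementary but has a gap as written. Your reduction subtracts $(c_e^{1/p}s^{-e/p})^p-c_e^{1/p}s^{-e/p}$, and from the simplified form of $F$ one finds $c_e=(1+(-1)^e u_0^{\,e})\,\alpha^{j_2}$, a constant in $k$ times $t^{j_2^2}$. Since $p\nmid j_2$, this is \emph{not} a $p$-th power in $K=k((t))$, so the reduction cannot be carried out over $K$; in fact over $K$ the extension at $b_K$ has ramification index $1$ and purely inseparable residue extension $K(t^{1/p})/K$. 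The fix is easy---the break is a geometric invariant, so base change to $\overline K$ (or just adjoin $t^{1/p}$) before reducing---but you should say so. The paper's route via Proposition~\ref{fact} avoids the explicit $p$-th root extraction, at the cost of invoking constancy of the genus in a flat family.
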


\begin{proof}
    We adapt the proof from \cite[Proposition 2.3.4]{prieswild}.
    There exists automorphisms $A_u$ and $A_v$ of fixing the closed points of $U$ and $V$ such that $A_u^*\phi_1$ and $A_v^*\phi_2$ can be given by the equations $x^p-x = u^{-j_1}$ and $y^p - y = v^{-j_2}$ respectively (see \cite[$\S 10.4$]{artin}). Denote the transformed covers as $\phi_1'$ and $\phi_2'$ respectively.
    We may suppose the Galois action of $\phi_1'$ maps $x \mapsto x+1$ and the Galois action of $\phi_2'$ maps $y \mapsto y +a$ for some $a \in \mathbb F_p^\times$.\\
    Let $\phi_R: W_R \to S^e_{uv}$ be the cover defined by the equation 
    \begin{equation}\label{phi_R equation}
    z^p-z=(u^{j_1}+av^{j_2}+d_0t)^{-1}
    \end{equation}
    for some $d_0 \in \Omega_{uv}^e$. Note that after reducing $\pmod {(v,t)}$, the equation becomes $z^p - z =u^{j_1}$, which is identical to the equations defining $\phi_1'$. So the normalization of the reduction is isomorphic to $\phi_1'$. Likewise the normalization of the reduction $\pmod {(u,t)}$ is isomorphic to $\phi_2'$. The Galois action of $\phi_R$ sends $z \mapsto z+1$, which reduces to the Galois action on $\phi_1'$ and $\phi_2'$ respectively. Hence the pullbacks of the normalization of the special fiber of $\phi_R$ to $U$ and $V$ are isomorphic to $\phi_1'$ and $\phi_2'$ as covers away from the branch point.
    
    \noindent Set $$d_0=\frac{(u+a_1t^{j_2})^{e}-u^{e}-at^{j_2e}}{u^{j_2}t}$$\\
    with $a_1=\sqrt[e]{a^{-1}}$. Note that $d_0 \in \Omega_{uv}^e$.\\
    Plugging $d_0$ into \eqref{phi_R equation} and simplifying gives 
    \begin{equation}\label{plugged in d0}
    z^p-z=u^{j_2}(u+a_1t^{j_2})^{-e}.
    \end{equation}
    Note that $u$ has no zero or pole in $\Omega_{uv,K}^e$. Thus $\phi_K$ has a unique branch point given by $u=-a_1t^{j_2}$ and $v=-t^{j_1}/a_1$. This $K$-point specializes to the branch point $(u,v)=(0,0)$ of $\phi_k$. Thus $\phi_R$ is branched at only one $R$-point with this $d_0$, given by $(u,v)=(-a_1t^{j_2},-a_1^{-1}t^{j_1})$. We call this $R$-point $b_R$.\\
    Note that \eqref{plugged in d0} is irreducible in $R$ since the right hand side is not of the form $\alpha^p - \alpha$. Hence $W_R$ and $W_K$ are irreducible.
    From equation \eqref{plugged in d0}, $u^{j_2}(u+a_1t^{j_2})^{-e}$ is an $e$th power of a uniformizer. Hence the largest that the ramification break can be is $e$. By Proposition \ref{fact}, the lower ramification break must be greater than or equal to $e-1$. Since $p$ cannot divide the ramification break, the lower ramification break on the generic fiber must be $e-1$.
\end{proof}

\begin{proposition}\label{fact}
    Given covers $\phi_1$ and $\phi_2$ with ramification breaks $j_1$ and $j_2$, the ramification break on the generic fiber of a flat deformation of these two covers, $\phi$, will have ramification break $j \geq j_1 + j_2 - 1$.
\end{proposition}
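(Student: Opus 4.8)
The plan is to extract the ramification break from an arithmetic‑genus calculation, exploiting that the arithmetic genus is locally constant in a flat projective family of curves. First I would globalize the situation: realize the deformation as a $\mathbb Z/p\mathbb Z$‑Galois cover $\phi : W \to Z$ of projective $R$‑curves such that the generic fibre $\phi_K : W_K \to Z_K$ (with $Z_K$ smooth) is branched at a single point with break $j$, while the special fibre $\phi_k : W_k \to Z_k$ has $Z_k$ reducible with two components carrying $\phi_1$ and $\phi_2$, the wild ramification over the point $b$ where these components meet having breaks $j_1$ and $j_2$. This is exactly the shape of the covers produced by the formal patching construction (via Theorem~\ref{harbaterstevenson}), so there is no loss of generality; and the break is a purely local invariant, so passing from the germ‑level covers of Notation~\ref{first notation} to a global completion changes nothing. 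Write $W_k = C_1 \cup C_2 \cup \cdots$, where $C_1$ and $C_2$ are the (smooth) copies of the source curves $X_1,X_2$ of $\phi_1,\phi_2$, and let $r$ be the number of irreducible components of $W_k$.

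Next I would compute and compare the two sides. Since $W \to \text{Spec}(R)$ is proper and flat with connected fibres, $p_a(W_k) = p_a(W_K) = g(W_K)$, the last equality because $W_K$ is smooth. Riemann--Hurwitz applied to $\phi_K$, a $\mathbb Z/p\mathbb Z$‑cover branched at one point with break $j$, where the different has degree $(p-1)(j+1)$, gives $g(W_K) = \frac{(p-1)(j-1)}{2}$. For the special fibre, Riemann--Hurwitz applied to each $C_i \to \mathbb P^1$ — using only that the break‑$j_i$ wild ramification over $b$ forces the different of $C_i$ to have degree at least $(p-1)(j_i+1)$ and that the base component has genus $\ge 0$ — gives $g(C_i) \ge \frac{(p-1)(j_i-1)}{2}$. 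Finally, the normalization exact sequence for the connected projective curve $W_k$ gives $p_a(W_k) = \sum_C g(\widetilde C) + \delta - r + 1$ with $\delta$ the total $\delta$‑invariant, and connectedness of $W_k$ forces $\delta \ge r - 1$, so $p_a(W_k) \ge g(C_1) + g(C_2)$. Combining these,
\begin{equation*}
\frac{(p-1)(j-1)}{2} = g(W_K) = p_a(W_k) \ge g(C_1)+g(C_2) \ge \frac{(p-1)(j_1-1)}{2} + \frac{(p-1)(j_2-1)}{2},
\end{equation*}
whence $j-1 \ge j_1+j_2-2$, i.e.\ $j \ge j_1+j_2-1$. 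In the equality case the remaining components of $W_k$ are rational and its dual graph is a tree, consistent with the value $j = e-1 = j_1+j_2-1$ obtained in Proposition~\ref{crossing}.

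I expect the main obstacle to be bookkeeping rather than depth: one must pin down precisely what ``flat deformation of these two covers'' means and verify that the normalizations of the components of $W_k$ genuinely recover $X_1$ and $X_2$ (so their genera may be fed into the estimate), that $W$ is connected — which forces $\delta \ge r-1$ and hence the inequality $p_a(W_k) \ge g(C_1)+g(C_2)$ that delivers $j \ge j_1+j_2-1$ — and that $W \to \text{Spec}(R)$ is genuinely proper and flat so that $p_a$ is locally constant. A purely local alternative — comparing the degree of the relative different on $W_K$ with that on the normalization of $W_k$, together with the $\delta$‑invariant of the singularity of $W_k$ over $b$ — should also work, but the arithmetic‑genus argument is cleaner and is the route I would take.
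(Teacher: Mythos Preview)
Your proposal is correct and follows essentially the same route as the paper: both arguments compute the (arithmetic) genus on each fibre via Riemann--Hurwitz, invoke constancy of the arithmetic genus in a flat proper family, and use that the singular contribution on the special fibre is nonnegative to obtain $\frac{p-1}{2}(j-1)\ge\frac{p-1}{2}(j_1-1)+\frac{p-1}{2}(j_2-1)$. Your version is simply more explicit about the $\delta$-invariant and connectedness bookkeeping that the paper compresses into the phrase ``$\alpha$ is some nonnegative integer, contributed by the singularity.''
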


\begin{proof}
    For an Artin-Schreier curve, the genus and ramification break are related by the formula $g=\frac{p-1}{2}(j-1)$, a corollary of the Riemann-Roch theorem. The genus of the special fiber of the deformation is $g_\phi=\frac{p-1}{2}((j_1-1)+(j_2-1))+\alpha$, where $\alpha$ is some nonnegative integer, contributed by the singularity. Note that the genus is constant in a flat family of curves (see Corollary 9.9.10 in \cite{hartshorne}). From the above equality we then get the inequality $j_\phi -1 \geq (j_1-1)+(j_2-1)$, which gives the fact.
\end{proof}

\begin{proposition} \label{realize}
    Let $\phi_1: X_1 \to \mathbb P^1$, $\phi_2: X_2 \to \mathbb P^1$ be $\mathbb Z/p\mathbb Z$-Galois covers branched at only one point with ramification breaks $j_1$ and $j_2$ such that $j_1+j_2 \equiv 0 \pmod p$. Then there exists a $\mathbb Z/p\mathbb Z$-Galois cover of curves $\phi: Y \to \mathbb P_k^1$ with the following properties:
    \begin{enumerate}
    \item $\phi$ has exactly one branch point.
    \item There is a single ramification point of $\phi$ above the branch point and its ramification break is $e = j_1+j_2-1$.
    \item $Y$ is smooth and connected.
    \item $\phi$ is a fiber of a cover $\phi_R:Y_R \to P_R^e$ with the property that the pullbacks of the special fiber of $\phi_R$ to $P_u$ and $P_v$ are isomorphic to $\phi_1$ and $\phi_2$ away from the branch point after normalization.
    \end{enumerate}
\end{proposition}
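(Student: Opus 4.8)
The plan is to construct $\phi$ by formal patching: build the cover locally near a node using Proposition~\ref{crossing}, glue it to $\phi_1$ and $\phi_2$ over the rest of a degenerate projective line, apply the patching theorem, and read off the generic fibre. Concretely, first choose coordinates on $P_u$ and $P_v$ so that the unique branch point of $\phi_1$ (resp.\ of $\phi_2$) is the node $b$ of the special fibre $X := P_u \cup_b P_v$ of $P_R^e$. Let $f \colon Y_0 \to X$ be the cover given by $\phi_1$ over $P_u$, by $\phi_2$ over $P_v$, and by identifying the unique point of $\phi_1^{-1}(b)$ with the unique point of $\phi_2^{-1}(b)$ (single points, since $b$ is a branch point of each $\phi_i$ and hence totally ramified). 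Matching the two $\mathbb Z/p\mathbb Z$-actions at the glued point via the scalar $a \in \mathbb F_p^\times$ appearing in the proof of Proposition~\ref{crossing} makes $f$ a $\mathbb Z/p\mathbb Z$-Galois cover of geometrically connected, reduced, projective $k$-curves. Put $\mathbb S := \{b\}$, which contains the singular locus of $X$. As local data at $b$, take $R_b := \Omega_{uv}^e$, a Noetherian normal complete local domain containing $R$ with $t$ in its maximal ideal, together with the finite generically separable $R_b$-algebra $A_b$ of the germ cover $W_R \to S^e_{uv}$ of Proposition~\ref{crossing}; part~(2) of that proposition, which identifies the normalised special fibre of $W_R \to S^e_{uv}$ with $\phi_1 \sqcup \phi_2$ near $b$, supplies the reduction isomorphisms $F_b$ and $E_b$. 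These assemble into a relative $\mathbb Z/p\mathbb Z$-Galois thickening problem for $(X, \mathbb S)$ whose thickening is $P_R^e$.

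By Theorem~\ref{harbaterstevenson} this problem has a solution $\phi_R \colon Y_R \to P_R^e$, a cover of projective normal $R$-curves with closed fibre $f$, restricting over the formal completion of $P_R^e$ along $X \setminus \{b\}$ to the trivial deformation of $f$ there (namely $\phi_1 \sqcup \phi_2$ minus their branch points), and with pullback over $\hat O_{P_R^e, b}$ equal to $W_R \to S^e_{uv}$; this gives property~(4). For properties~(1)--(3) I would examine the generic fibre $\phi_K \colon Y_K \to \mathbb P^1_K$. Since $Y_R$ is normal and connected --- connectedness coming from that of the closed fibre $Y_0$ by properness over the local ring $R$ --- it is irreducible, so its dense open $Y_K$ is irreducible, hence connected, and in fact geometrically connected over $K$ because $Y_0$ is geometrically connected and reduced; as a normal curve over a field, $Y_K$ is smooth. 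Because the deformation is trivial away from $b$, the branch locus of $\phi_R$ lies on the closure of the $R$-point $b_R$ of Proposition~\ref{crossing}, so $\phi_K$ is branched exactly at $b_K$, where by part~(4) of that proposition the ramification break is $j_1 + j_2 - 1$, and irreducibility of $Y_K$ forces total ramification there. So taking $\phi := \phi_K$ yields a smooth, connected cover of $\mathbb P^1_K$ with exactly one branch point, a single point above it, break $e = j_1 + j_2 - 1$, and $\phi$ a fibre of $\phi_R$, which satisfies (1)--(4). If a cover over $k$ itself is wanted, one spreads the construction out over a finitely generated $k$-subalgebra of $R$ to a family $\phi_T \colon Y_T \to P_T$ over a smooth affine $k$-curve $T$ with a point $t_0$ where $\hat O_{T, t_0} \simeq R$ and $P_T$ pulls back to $P_R^e$, and takes $\phi$ to be the fibre at a general $t_1 \in T(k)$: smoothness, connectedness, and the branch data are constructible conditions holding at the generic point of $T$, and $\phi$ is a fibre of $\phi_T$, which extends $\phi_R$.

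The step I expect to be the main obstacle is the bookkeeping that makes the thickening problem well-posed: one must check that the reduction isomorphisms $F_b$ and $E_b$ can be chosen $\mathbb Z/p\mathbb Z$-equivariantly and compatibly with the ring inclusions, so that the globally glued cover $f$ and the local cover of Proposition~\ref{crossing} fit into a single relative $\mathbb Z/p\mathbb Z$-Galois problem; proving connectedness (indeed geometric connectedness) of the generic fibre also relies on the normality of $Y_R$. The descent from $K$ to $k$ is routine once the family has been spread out. This argument follows closely the template of \cite[Proposition~2.3.4]{prieswild}.
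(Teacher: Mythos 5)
Your proposal is correct and follows essentially the same route as the paper: place the branch points of $\phi_1$ and $\phi_2$ at the node of the special fibre of $P_R^e$, use Proposition~\ref{crossing} to supply the local data of a relative $\mathbb Z/p\mathbb Z$-Galois thickening problem, solve it with Theorem~\ref{harbaterstevenson}, read off smoothness, the single branch point, and the break $e=j_1+j_2-1$ on the generic fibre, and then spread out over a finitely generated $k$-subalgebra of $R$ and specialize to a general $k$-point. Your write-up is if anything slightly more explicit than the paper's (on matching the Galois actions at the glued point and on geometric connectedness of $Y_K$), but the argument is the same.
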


\begin{proof}
    We adapt the proof of Theorem 2.3.7 from \cite{prieswild}. From Notation \ref{first notation}, label $X^*=P^e_R$,  $\mathbb S = \{b\}$, and $G =\Zp$. 
    There exists ramified points $\eta_1\in\phi_1^{-1}(u)$ and $\eta_2 \in \phi_2^{-1}(v)$. Consider the $\Zp$-Galois covers of germs of curves $\hat{\phi}_1: \hat{X}_{\eta_1} \to U$ and $\hat{\phi}_2:\hat{Y}_{\eta_2} \to V$. 
    Apply Proposition \ref{crossing} to $\hat{\phi}_1$ and $\hat{\phi}_2$ to get $\Zp$-Galois cover $\hat{\phi}_R:W_R \to S^e_{uv}$ which is ramified at one point with ramification break $e$. 
    Note that $\hat{\phi}_R$ corresponds to an inclusion of rings.
    Consider the cover $\phi_k$ of the special fiber of $P_R^e$ which restricts to $\phi_1$ over $P_u$ and to $\phi_2$ over $P_v$. 
    Form a relative $G$-Galois thickening problem using $\phi_k$ and $\hat{\phi}_R$ and the isomorphisms from the pullbacks of the special fiber of $\hat{\phi}_R$ to $\phi_1$ and $\phi_2$. 
    By Theorem \ref{harbaterstevenson}, this problem has a solution, a $\Zp$ cover $\phi_R: Y_R \to P_R^{e}$. 
    The closed fiber of $\phi_R$ is isomorphic to $\hat{\phi}_R$ and $\phi_R$ is isomorphic to the trivial deformation away from the closed point. 
    Hence, $Y_K$ is smooth because $W_K$ and the trivial deformation are smooth.
    
    Choose a subring $O \subseteq R$ finitely generated over $k$ with $O\neq k$, such that $\phi_R$ can be defined over $\text{Spec}(O)$. Note that such a subring exists because $\phi_R$ is defined using only finitely many elements of $R$.
    Since $k$ is algebraically closed, there are infinitely many $k$-points of $\text{Spec}(O)$. 
    Let $L$ be the set of $k$-points, $x$, of $\spec{O}$ such that $\phi_x$ is not a $\Zp$ cover of smooth connected curves. 
    Note that the closure $\overline L \neq \spec{O}$ since $Y_K$ is smooth and irreducible \cite[\href{https://stacks.math.columbia.edu/tag/055G}{Lemma 055G}]{stacks-project}.
    Let $\alpha \in \spec{O}\setminus L$ be a $k$-point and $\phi := \phi_\alpha: Y \to X_k$ be the fiber over $\alpha$. 
    The map $\phi$ inherits properties 1 and 2 from $\phi_R$ and is smooth and connected by construction.
\end{proof}

\begin{proposition}\label{additive a-number}
In the notation of Proposition~\ref{realize}, the special fiber of $\phi_R$ is a cover of stable curves where the $a$-number of the cover is $a(X_1) + a(X_2)$.
\end{proposition}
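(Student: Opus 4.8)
The plan is to describe the special fiber $Y_k$ of $\phi_R$ explicitly as a nodal curve assembled from $X_1$ and $X_2$, and then to use the fact that a curve with a single node joining two distinct components is of compact type, so that its space of regular differentials --- together with the Cartier operator --- splits as a direct sum over the two pieces.

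First I would pin down the special fiber. By Proposition~\ref{realize}(4) the normalization of $Y_k$ is $X_1 \sqcup X_2$, with $X_1$ lying over the component $P_u$ and $X_2$ over $P_v$ of the special fiber of $P^e_R$, these two copies of $\mathbb P^1$ meeting at the single node $b$. Since each $\phi_i\colon X_i\to\mathbb P^1$ is a $\Zp$-Galois cover branched only at $b$, its inertia group over $b$ is all of $\Zp$, so $\phi_i$ is totally ramified there and there is a unique point $\eta_i\in X_i$ over $b$. As $\phi_R$ is branched over a single $R$-point above $b$, the induced cover of germs at the node has a unique ramification point over it. Hence $Y_k$ is obtained from $X_1\sqcup X_2$ by a transverse gluing $\eta_1\leftrightarrow\eta_2$: it is connected, reduced, has exactly one node, and its dual graph is a tree, so $Y_k$ is of compact type.

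Next I would compute. For the nodal curve $Y_k$ one takes $a(Y_k):=\dim_k\ker\bigl(\mathcal C_{Y_k}\colon H^0(Y_k,\omega_{Y_k})\to H^0(Y_k,\omega_{Y_k})\bigr)$, where $\omega_{Y_k}$ is the dualizing sheaf and $\mathcal C_{Y_k}$ its Cartier operator; this agrees with the usual $a$-number when the curve is smooth and with $\dim_k\ker\bigl(F\colon H^1(Y_k,\mathcal O_{Y_k})\to H^1(Y_k,\mathcal O_{Y_k})\bigr)$ in general. Pulling back along the normalization $\nu\colon X_1\sqcup X_2\to Y_k$, a global section of $\omega_{Y_k}$ is the same datum as a pair $(\omega_1,\omega_2)$ of differentials on $X_1,X_2$, each regular away from $\eta_i$ and with at worst a simple pole there, subject to $\mathrm{Res}_{\eta_1}\omega_1+\mathrm{Res}_{\eta_2}\omega_2=0$. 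But $\omega_i$ has at most the one pole $\eta_i$ on $X_i$, so the residue theorem forces $\mathrm{Res}_{\eta_i}\omega_i=0$, hence $\omega_i$ is regular; therefore $\nu^*$ is an isomorphism
\begin{equation*}
H^0(Y_k,\omega_{Y_k})\;\xrightarrow{\sim}\;H^0(X_1,\Omega^1_{X_1})\oplus H^0(X_2,\Omega^1_{X_2}).
\end{equation*}
The Cartier operator is local on the normalization and compatible with $\nu^*$, so under this isomorphism $\mathcal C_{Y_k}$ becomes $\mathcal C_{X_1}\oplus\mathcal C_{X_2}$; taking kernels gives $a(Y_k)=\dim_k\ker\mathcal C_{X_1}+\dim_k\ker\mathcal C_{X_2}=a(X_1)+a(X_2)$. (Equivalently: the normalization sequence $0\to\mathcal O_{Y_k}\to\nu_*\mathcal O_{X_1}\oplus\nu_*\mathcal O_{X_2}\to k\to 0$, with $k$ a skyscraper at the node, yields $H^1(Y_k,\mathcal O_{Y_k})\cong H^1(X_1,\mathcal O_{X_1})\oplus H^1(X_2,\mathcal O_{X_2})$ compatibly with Frobenius, because the induced map $H^0(\mathcal O_{X_1})\oplus H^0(\mathcal O_{X_2})=k^2\to k$ is surjective; in Jacobian language $\mathrm{Pic}^0(Y_k)\cong\mathrm{Jac}(X_1)\times\mathrm{Jac}(X_2)$, with no torus factor.)

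The only step carrying real content is this splitting, and it hinges on $Y_k$ having no loop in its dual graph: a single node between two distinct components means a relative differential with a simple pole at a preimage of the node is in fact holomorphic, so no $\mathbb G_m$-part appears in $\mathrm{Pic}^0(Y_k)$ either to supply or to absorb part of the $a$-number. The remaining points requiring a little care are checking that $Y_k$ is glued along exactly one point (Step~1) and fixing the convention that the ``$a$-number of the special fiber'' is the notion to which the Booher--Cais lower bound and the later semicontinuity argument apply; once these are in place the identity is immediate.
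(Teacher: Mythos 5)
Your overall route is the same as the paper's: identify the special fiber as $X_1$ and $X_2$ glued at a single point with tree dual graph, conclude that the generalized Jacobian is $\mathrm{Jac}(X_1)\times\mathrm{Jac}(X_2)$ with no toric part, and deduce additivity of the $a$-number. Where you add value is at the end: the paper stops at ``the generalized Jacobian is the product\dots in particular the $a$-number is $a(X_1)+a(X_2)$,'' whereas you actually carry out the computation, showing via the residue theorem that $\nu^*$ identifies $H^0(Y_k,\omega_{Y_k})$ with $H^0(X_1,\Omega^1_{X_1})\oplus H^0(X_2,\Omega^1_{X_2})$ compatibly with the Cartier operator. That expansion is correct and is exactly the content the paper leaves implicit; it also makes explicit the convention for the $a$-number of a singular fiber that the semicontinuity argument in Proposition~\ref{a-number godown} needs.

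The one genuine gap is at the start: you assert that $Y_k$ is obtained from $X_1\sqcup X_2$ by a \emph{transverse} gluing, i.e.\ that the unique singular point is an ordinary double point with $\delta$-invariant $1$. Knowing that each $\phi_i$ is totally ramified over $b$ (correct, since $\Zp$ is simple) only tells you the normalization has exactly two points over the singularity; it does not rule out the two smooth branches meeting with higher tangency, or the singularity otherwise having $\delta>1$. If $\delta>1$, the local description of $\omega_{Y_k}$ you use (simple poles with opposite residues) is not available and $h^0(\omega_{Y_k})$ would exceed $g(X_1)+g(X_2)$. This is precisely the point the paper's proof spends its effort on: by flatness of $\phi_R$ the arithmetic genus of the special fiber equals $g(Y_K)=\frac{p-1}{2}(e-1)=\frac{p-1}{2}\bigl((j_1-1)+(j_2-1)\bigr)=g(X_1)+g(X_2)$ (the setup of Proposition~\ref{fact}), so the singularity contributes nothing to the arithmetic genus and must be an ordinary double point. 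Insert that genus count (or some other computation of the local ring at the singular point of $W_k$) before your Step~2 and your argument is complete.
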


\begin{proof}
Away from the reduction of the branch point, in the special fiber the cover $Y_k$ is the disjoint union of $X_1$ and $X_2$ with the ramification points removed.  The genus of $Y_k$ is the sum of the genera of $X_1$ and $X_2$ with a contribution from the singularity caused by gluing $X_1$ and $X_2$.  This is the same setup as Proposition~\ref{fact}.  However, we know that 
\[
g(X_1) + g(X_2) = \frac{p-1}{2}((j_1-1)+(j_2-1)) = \frac{p-1}{2}  (j_1+j_2-1) = g(Y_R)
\]
so the singularity makes no contribution.  Thus the singularity is an ordinary double point, so the generalized Jacobian is the product of the Jacobians of $X_1$ and $X_2$ and has no toric part.  In particular, the $a$-number is $a(X_1) + a(X_2)$.
\end{proof}

\begin{proposition}\label{a-number godown}
Let $S$ be irreducible with generic point $\eta$, and $\pi : X \to S$ be a smooth family of projective curves over $S$.  Then for any point $s \in S$, $a(X_\eta) \leq a(X_s)$.
\end{proposition}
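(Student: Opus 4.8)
The plan is to prove this via a semicontinuity argument for the $a$-number in a smooth family. Recall that the $a$-number of a curve $C$ over $k$ equals $\dim_k \operatorname{Hom}(\alpha_p, \operatorname{Jac}(C))$, equivalently $\dim_k \ker(F : H^1(C, \mathcal{O}_C) \to H^1(C, \mathcal{O}_C))$ where $F$ is the Frobenius (the Cartier operator is the Serre dual of this, so the kernel dimensions agree). The key point is that, in a flat family, the formation of $H^1(X, \mathcal{O}_X)$ commutes with base change (since $\pi$ is smooth projective of relative dimension $1$, $R^1\pi_* \mathcal{O}_X$ is locally free and its formation commutes with base change), and the relative Frobenius gives an $\mathcal{O}_S$-linear map $F : R^1\pi_* \mathcal{O}_X \to R^1\pi_* \mathcal{O}_X$ after the appropriate Frobenius twist. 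The $a$-number at a point $s$ is then the dimension of the kernel of the fiber of this map of vector bundles.

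First I would set up the relative Frobenius. Since we are over a perfect field $k$ of characteristic $p$, for the smooth projective family $\pi : X \to S$ we form the relative Frobenius $F_{X/S} : X \to X^{(p)}$, where $X^{(p)} = X \times_{S, \operatorname{Frob}_S} S$. Pushing forward the map $\mathcal{O}_{X^{(p)}} \to F_{X/S,*} \mathcal{O}_X$ and taking $R^1$, one gets an $\mathcal{O}_S$-linear map $\mathcal{F} : \mathcal{E} \to \mathcal{E}'$ of locally free $\mathcal{O}_S$-modules, where $\mathcal{E} = R^1\pi_*\mathcal{O}_X$ and $\mathcal{E}' = R^1\pi^{(p)}_* \mathcal{O}_{X^{(p)}} \cong \operatorname{Frob}_S^* \mathcal{E}$. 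Both sheaves are locally free of rank $g$ (the common genus in the family), and by cohomology-and-base-change their fibers at $s \in S$ are $H^1(X_s, \mathcal{O}_{X_s})$ and $H^1(X_s^{(p)}, \mathcal{O}_{X_s^{(p)}})$, compatibly with the Frobenius. Hence $a(X_s) = g - \operatorname{rank}(\mathcal{F}_s)$, where $\mathcal{F}_s$ is the induced linear map on fibers.

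Next I would invoke lower semicontinuity of the rank of a morphism of vector bundles: for a map $\mathcal{F} : \mathcal{E} \to \mathcal{E}'$ of locally free sheaves on $S$, the function $s \mapsto \operatorname{rank}(\mathcal{F}_s)$ is lower semicontinuous, so it can only drop on closed subsets; in particular, since $S$ is irreducible with generic point $\eta$, we have $\operatorname{rank}(\mathcal{F}_\eta) \geq \operatorname{rank}(\mathcal{F}_s)$ for every $s \in S$ (the generic rank is the maximum). Equivalently, $a(X_\eta) = g - \operatorname{rank}(\mathcal{F}_\eta) \leq g - \operatorname{rank}(\mathcal{F}_s) = a(X_s)$, which is exactly the claim. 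To justify the semicontinuity cleanly one can work Zariski-locally on $S$ where $\mathcal{E}$ and $\mathcal{E}'$ are free, represent $\mathcal{F}$ by a matrix over $\mathcal{O}_S(S)$, and note that the locus where all $r \times r$ minors vanish is closed; the rank at $s$ is the largest $r$ for which some $r \times r$ minor is nonzero at $s$, and the generic point lies in the complement of the vanishing locus of every minor that is not identically zero.

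The main obstacle — really the only delicate point — is making sure the formation of the relative Frobenius map on $R^1$ genuinely commutes with base change, i.e. that $\mathcal{F}_s$ is identified with the absolute Frobenius (Hasse--Witt) operator on $H^1(X_s, \mathcal{O}_{X_s})$ whose kernel computes $a(X_s)$. This follows from: (i) $R^i\pi_*\mathcal{O}_X$ commuting with base change for $i = 0, 1$ by Grauert's theorem (constancy of $h^0$ and $h^1$ in a smooth proper family of curves), applied also to $\pi^{(p)}$; (ii) functoriality of the trace/counit map $\mathcal{O}_{X^{(p)}} \to F_{X/S,*}\mathcal{O}_X$ under base change; and (iii) the standard factorization of the absolute Frobenius of $X_s$ through $F_{X_s/k}$, which identifies the relative-Frobenius operator on $H^1$ with the semilinear absolute one. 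Granting these standard facts, the semicontinuity argument above gives $a(X_\eta) \le a(X_s)$.
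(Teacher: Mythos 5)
Your proof is correct, but it runs the semicontinuity argument on the dual side and on the base rather than on the total space, which is genuinely different in its details from what the paper does. The paper works upstairs on $X$ with the Cartier operator itself, viewed as a map of $\mathcal{O}_X$-modules $F_*\Omega^1_{X/S} \to \Omega^1_{X/S}$; it forms the kernel sheaf $\mathscr{F}$, argues it is flat over $S$, identifies $a(X_s)$ with $\dim_{k(s)} H^0(X_s, \mathscr{F}_s)$, and invokes the semicontinuity theorem for cohomology of a flat coherent sheaf (Hartshorne III.12.8). You instead push everything down to $S$ first: the Hasse--Witt map becomes an $\mathcal{O}_S$-linear morphism between locally free sheaves built from $R^1\pi_*\mathcal{O}_X$ and its Frobenius twist, and the conclusion reduces to the elementary fact that the rank of a matrix over $\mathcal{O}_S$ is maximal at the generic point (vanishing loci of minors are closed). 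What your route buys is that only the \emph{map} of bundles needs to commute with base change, which follows from cohomology-and-base-change for relative curves; you never need to know that a \emph{kernel} sheaf is flat or that its formation commutes with restriction to fibers, which is the one delicate point the paper's proof asserts rather than checks. What the paper's route buys is brevity and the fact that it stays with the Cartier operator on $1$-forms, the object used throughout the rest of the paper. Two small points to tidy in your write-up: the linearized relative Frobenius naturally goes $\mathrm{Frob}_S^*\mathcal{E} \to \mathcal{E}$ rather than $\mathcal{E} \to \mathrm{Frob}_S^*\mathcal{E}$ (harmless, since only the rank matters and the Frobenius twist is a bijection on underlying sets of the fiber), and Grauert's theorem as usually stated requires $S$ integral, whereas for relative dimension $1$ you can get base change for $R^1\pi_*\mathcal{O}_X$ unconditionally from the vanishing of $R^2$, which also matches the bare hypothesis that $S$ is irreducible.
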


\begin{proof}
The Cartier operator can be viewed as a map of $\mathcal{O}_{X/S}$-modules $F_* \Omega^1_{X/S} \to \Omega^1_{X/S}$, whose kernel is a coherent sheaf $\mathscr{F}$ such that for $s \in S$
\[
a(X_s) = \dim_{k(s)} H^0(X_s,\mathscr{F}_s).
\]
Note that $\mathscr{F}$ is flat over $X$ and hence over $S$ as it is a kernel of a map of locally free sheaves.
By the semicontinuity theorem \cite[Theorem III.12.8]{hartshorne}, the $a$-number is an upper semi-continuous function on $S$, i.e. $a(X_\eta) \leq a(X_s)$.
\end{proof}

\begin{proposition}\label{generalized family}
    For a prime $p$, if there exists a $\Zp$-Galois cover $X \to \mathbb P^1$ with ramification break $d\equiv -1 \pmod p$ with $a$-number equal to $L(d)$, then for any $e\geq d$ with $e \equiv d \pmod{p^2}$ there exists a $\Zp$-Galois cover $X \to \mathbb P^1$ branched at one point with ramification break $e$ and $a$-number equal to $L(e)$.
\end{proposition}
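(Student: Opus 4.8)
The plan is to induct on the number of times we need to add $p^2$ to get from $d$ to $e$. Since $e \geq d$ and $e \equiv d \pmod{p^2}$, write $e = d + np^2$ for some $n \geq 0$; I will show that if the statement holds for $d + (n-1)p^2$ then it holds for $d + np^2$. The base case $n = 0$ is the hypothesis. For the inductive step, I will glue the curve $X_1 \to \mathbb P^1$ with ramification break $d + (n-1)p^2 \equiv -1 \pmod p$ and $a$-number $L(d + (n-1)p^2)$ (which exists by the inductive hypothesis) together with the curve $X_2 \to \mathbb P^1$ from Proposition~\ref{bc2}, which has ramification break $p^2 + 1 \equiv 1 \pmod p$ and $a$-number $L(p^2+1)$. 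Note the crucial arithmetic check: $(d + (n-1)p^2) + (p^2 + 1) \equiv -1 + 1 \equiv 0 \pmod p$, so Proposition~\ref{realize} applies to $\phi_1$ and $\phi_2$ with $j_1 = d + (n-1)p^2$ and $j_2 = p^2 + 1$.

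Applying Proposition~\ref{realize}, I obtain a $\Zp$-Galois cover $\phi : Y \to \mathbb P^1$ branched at one point with a single ramification point of ramification break $j_1 + j_2 - 1 = (d + (n-1)p^2) + (p^2+1) - 1 = d + np^2 = e$, and with $Y$ smooth and connected; moreover $\phi$ arises as a fiber $\phi_\alpha$ of the relative cover $\phi_R : Y_R \to P_R^e$ whose special fiber has the properties described there. It remains to pin down the $a$-number of $Y$. By Proposition~\ref{additive a-number}, the special fiber of $\phi_R$ has $a$-number $a(X_1) + a(X_2) = L(d + (n-1)p^2) + L(p^2+1)$, which by Lemma~\ref{addingpsquared} equals $L(d + np^2) = L(e)$. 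By Proposition~\ref{a-number godown}, applied to the smooth family $Y_R \to \operatorname{Spec}(O)$ over the chosen irreducible base (or rather to the smooth locus over which all fibers are smooth connected curves, after shrinking as in the proof of Proposition~\ref{realize}), the $a$-number of the generic fiber is $\leq$ the $a$-number of any special fiber; applying this with the special fiber being the one computed above gives $a(Y) = a(\phi_\alpha) \leq L(e)$. Here one must be slightly careful: Proposition~\ref{a-number godown} is stated for a smooth family, so one compares the generic fiber $Y_K$ (smooth) with nearby smooth fibers, and separately relates $a(Y_K)$ to the stable special fiber via the semicontinuity of the $a$-number for families of stable curves; in any case the upper bound $a(Y) \leq L(e)$ follows since the special fiber computation bounds the generic $a$-number from above and $a$ is upper semicontinuous on the smooth locus. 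Combined with the Booher--Cais lower bound $a(Y) \geq L(e)$ from \cite[Theorem 1.1]{boohercais}, we conclude $a(Y) = L(e)$, completing the induction.

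The main obstacle is the bookkeeping around Proposition~\ref{a-number godown}: that proposition literally requires a \emph{smooth} family, but the special fiber computed in Proposition~\ref{additive a-number} is a \emph{stable} (nodal) curve, not a smooth one, so one cannot directly apply semicontinuity between the singular special fiber and the smooth generic fiber. The clean way to handle this is to invoke upper semicontinuity of the $a$-number on the family of stable curves $Y_R \to \operatorname{Spec}(O)$ (the $a$-number is defined for the Jacobian, or equivalently via $H^1(\mathcal O)$, and is upper semicontinuous on any flat family of stable curves), so that $a(Y_K) \leq a(Y_k) = L(e)$, and then descend to the chosen $k$-point $\alpha$ via the smooth-family version, Proposition~\ref{a-number godown}, using $a(Y) = a(Y_\alpha) \leq a(Y_K) \cdot$ — more precisely $a(Y_K) \leq a(Y_\alpha)$ is the wrong direction; instead note $a(Y_\alpha) \le a(Y_k)$ directly by semicontinuity on the stable family, since $\alpha$ specializes toward the node in $\operatorname{Spec}(R)$. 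Making this chain of specializations precise, and ensuring the $a$-number is genuinely upper semicontinuous across the nodal fiber, is the one point that needs care; everything else is a direct assembly of the already-established propositions and the identity $L(d + p^2) = L(d) + L(p^2+1)$.
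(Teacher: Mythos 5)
Your argument is the same as the paper's: the same induction on $n$ with $e=d+np^2$, gluing the inductive curve to the curve of Proposition \ref{bc2} via Proposition \ref{realize}, computing the special fiber's $a$-number with Proposition \ref{additive a-number}, matching it with $L(e)$ via Lemma \ref{addingpsquared}, and closing with semicontinuity plus the Booher--Cais lower bound. You are also right to flag that Proposition \ref{a-number godown} is stated only for smooth families while the special fiber of $\phi_R$ is nodal; the paper elides this point, and the honest fix is that the Cartier operator and its kernel still make sense on the relative dualizing sheaf of the semistable family $Y_R \to \spec{R}$ (equivalently, one works with the generalized Jacobian, as Proposition \ref{additive a-number} implicitly does), so upper semicontinuity still applies across the nodal fiber.

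The one step in your write-up that is actually wrong is the final specialization claim: ``$a(Y_\alpha)\le a(Y_k)$ directly by semicontinuity on the stable family, since $\alpha$ specializes toward the node in $\operatorname{Spec}(R)$.'' The chosen point $\alpha$ and the image of the closed point of $\spec{R}$ are two \emph{distinct closed points} of $\spec{O}$; neither is a specialization of the other, so upper semicontinuity says nothing directly about their relative $a$-numbers --- it only compares each of them with the generic point $\eta$ of $\spec{O}$. The correct chain is: semicontinuity over $\spec{R}$ gives $a(Y_\eta)=a(Y_K)\le a(Y_k)=L(e)$; then upper semicontinuity over $\spec{O}$ says the jump locus $\{s : a(Y_s) > a(Y_\eta)\}$ is a proper closed subset, so a general $k$-point $\alpha$ (chosen also outside $\overline L$, as in the proof of Proposition \ref{realize}) satisfies $a(Y_\alpha)=a(Y_\eta)\le L(e)$; finally $a(Y_\alpha)\ge L(e)$ by Booher--Cais. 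With that correction your proof coincides with the paper's.
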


\begin{proof}
    We prove Proposition \ref{generalized family} by induction. By assumption there exists a $\Zp$-Galois cover $X \to \mathbb P^1$ with ramification break $d\equiv -1 \pmod p$ with $a$-number equal to $L(d)$. Assume $X_1\to \mathbb P^1$ is a $\Zp$-Galois cover of curves with ramification break $e \geq d$ with $e\equiv d \pmod {p^2}$ and $a$-number $a(X_1) = L(e)$. Let $X_2 \to \mathbb P^1$ be the Artin-Schreier cover defined by $y^p-y=-x^{p^2+1}-x^{\frac{p^2+1}{2}+p}$. By Proposition \ref{bc2}, $X_2$ has $a$-number $a(X_2)=L(p^2+1)$. Using Proposition \ref{realize}, there exists a $\Zp$-Galois cover $X \to \mathbb P^1$ with ramification break $e+p^2$ and, by Proposition \ref{additive a-number} and \ref{a-number godown}, $a$-number $a(X)=a(X_1)+a(X_2)$. By Lemma \ref{addingpsquared}, $L(e+p^2)=L(e)+L(p^2+1)=a(X_1)+a(X_2)$. Hence the $a$-number of $X$ is equal to the lowerbound, $a(X)=L(e+p^2)$. Hence, by induction, for any integer $e \geq d$ with $e \equiv d \pmod{p^2}$ there exists a $\Zp$-Galois cover $X \to \mathbb P^1$ with ramification break $e$ and $a$-number equal to $L(e)$.
\end{proof}

\begin{theorem}
\label{family}
    For any odd prime $p$ and any positive $d \equiv -1 \pmod {p^2}$ there exists a $\Zp$-Galois cover $X \to \mathbb P^1$ branched at one point with ramification break $d$ and $a$-number equal to $L(d)$.
\end{theorem}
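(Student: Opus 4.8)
The plan is to reduce everything to two results already in hand: the explicit base curve of Proposition~\ref{bctheorem} and the inductive patching mechanism of Proposition~\ref{generalized family}. First I would record the elementary observation that any positive integer $d$ with $d \equiv -1 \pmod{p^2}$ has the form $d = mp^2 - 1$ for some integer $m \geq 1$; in particular $d \geq p^2 - 1$, so the smallest such $d$ is exactly $p^2 - 1$.

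Next, Proposition~\ref{bctheorem} provides, for every odd prime $p$, the Artin-Schreier cover $X \to \mathbb P^1$ defined by $y^p - y = -x^{p^2-1} - x^{(p^2-1)/2}$, which is branched at one point, has ramification break $p^2 - 1$, and has $a$-number $L(p^2 - 1)$. Since $p^2 - 1 \equiv -1 \pmod{p^2}$, this already settles the smallest case, and since $p^2 - 1 \equiv -1 \pmod p$ it meets the hypothesis of Proposition~\ref{generalized family}.

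Then I would invoke Proposition~\ref{generalized family} with this base curve, taking the ``$d$'' there to be $p^2 - 1$: we conclude that for every $e \geq p^2 - 1$ with $e \equiv p^2 - 1 \pmod{p^2}$ --- equivalently, for every positive $e$ with $e \equiv -1 \pmod{p^2}$ --- there is a $\Zp$-Galois cover $X \to \mathbb P^1$ branched at one point with ramification break $e$ and $a$-number $L(e)$. Renaming $e$ as $d$ gives exactly the statement of Theorem~\ref{family}.

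I do not expect a genuine obstacle at this final step: all of the work lies upstream, in the explicit Cartier-operator computation behind Proposition~\ref{bctheorem} (via Lemmas~\ref{matrixpart1} and \ref{matrixpart2}) and in the chain Proposition~\ref{realize} $\to$ Proposition~\ref{additive a-number} $\to$ Proposition~\ref{a-number godown} $\to$ Proposition~\ref{generalized family} that powers the induction, the crucial arithmetic input being $L(e + p^2) = L(e) + L(p^2 + 1)$ from Lemma~\ref{addingpsquared} together with $L(p^2 + 1) = L(p^2 - 1)$ from Lemma~\ref{simplified lower bounds}. The only things to verify are bookkeeping: that the base curve really does lie in the congruence class $-1 \pmod{p^2}$, that $p^2 - 1$ is the minimal positive representative of that class (so the induction reaches every relevant $d$), and that the lower bound $a_X \geq L(d)$ of \cite{boohercais} is what pins the $a$-number to $L(d)$ at each inductive step.
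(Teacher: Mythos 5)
Your proposal is correct and follows exactly the paper's own argument: Proposition~\ref{bctheorem} supplies the base case $d = p^2-1$, and Proposition~\ref{generalized family} carries the induction to every larger $d \equiv -1 \pmod{p^2}$. The additional bookkeeping you flag (that $p^2-1$ is the minimal positive representative of the class and satisfies $p^2-1 \equiv -1 \pmod p$) is implicit in the paper's two-sentence proof but entirely accurate.
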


\begin{proof}
    By Proposition \ref{bctheorem}, the cover defined by $y^p-y=-x^{p^2-1}-x^{\frac{p^2-1}{2}}$ has $a$-number equal to $L(p^2-1)$. By Proposition \ref{generalized family}, for any $d>p^2-1$ with $d\equiv -1 \pmod {p^2}$, there exists $\Zp$-Galois cover $X \to \mathbb P^1$ with ramification break $d$ and $a$-number equal to $L(d)$.
\end{proof}

\begin{theorem}\label{farnell family}
       For any odd prime $p$ and any positive $d \equiv p-1 \pmod {p^2}$ there exists a $\Zp$-Galois cover $X \to \mathbb P^1$ branched at one point with ramification break $d$ and $a$-number equal to $L(d)$. 
\end{theorem}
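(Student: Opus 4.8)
The plan is to follow the template of the proof of Theorem~\ref{family}, substituting the Farnell--Pries base case for the Booher--Cais base case. The congruence class $d \equiv p-1 \pmod{p^2}$ has smallest positive representative $d = p-1$ (since $0 < p-1 < p^2$), and crucially $p-1 \equiv -1 \pmod p$, so this class lies within reach of the patching machinery of Section~\ref{infinite family}.

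First I would establish the base case. By Proposition~\ref{farnellresult}, the Artin--Schreier cover $X_0 \to \mathbb P^1$ defined by $y^p - y = x^{p-1}$ is branched only over $\infty$; since $p \nmid p-1$, the pole order of $x^{p-1}$ at $\infty$ is already minimal, so the ramification break there is $p-1$, and Proposition~\ref{farnellresult} yields $a(X_0) = L(p-1)$. This provides a $\Zp$-Galois cover branched at one point, with ramification break $p-1 \equiv -1 \pmod p$, whose $a$-number equals the Booher--Cais lower bound --- precisely the hypothesis of Proposition~\ref{generalized family}.

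Second, I would apply Proposition~\ref{generalized family} with $d = p-1$: for every integer $e \geq p-1$ with $e \equiv p-1 \pmod{p^2}$ there exists a $\Zp$-Galois cover $X \to \mathbb P^1$ branched at one point with ramification break $e$ and $a$-number $L(e)$. Since every positive integer congruent to $p-1$ modulo $p^2$ is at least $p-1$, this settles every case in the statement.

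I do not anticipate a genuine obstacle here. The analytic input --- that a degree-$(p-1)$ Artin--Schreier cover attains the minimal $a$-number --- is already packaged into Proposition~\ref{farnellresult}, and the deformation-theoretic input is Proposition~\ref{generalized family}, whose only requirement on the seed cover is that its ramification break be $\equiv -1 \pmod p$. The sole point needing a moment's care is confirming that $p-1$ is the least positive element of its residue class modulo $p^2$, so that the hypothesis $e \geq d$ in Proposition~\ref{generalized family} costs nothing.
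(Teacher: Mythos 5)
Your proposal is correct and follows the paper's own proof exactly: take a degree-$(p-1)$ Artin--Schreier cover as the base case via Proposition~\ref{farnellresult} (noting $p-1\equiv -1\pmod p$), then apply Proposition~\ref{generalized family} to reach every $d\equiv p-1\pmod{p^2}$. No substantive difference from the paper's argument.
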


\begin{proof}
    Choose $f\in k[x]$ such that $\deg(f)=p-1$. By Proposition \ref{farnellresult}, the cover defined by $y^p-y=f$ has $a$-number equal to $L(p-1)$. By Proposition \ref{generalized family}, for any $d>p-1$ with $d\equiv p-1 \pmod {p^2}$, there exists $\Zp$-Galois cover $X \to \mathbb P^1$ with ramification break $d$ and $a$-number equal to $L(d)$.
\end{proof}

\begin{theorem}\label{code examples}
    For any odd prime $p \leq 23$ and any positive $d \equiv -1 \pmod p$, there exists a $\Zp$-Galois cover $X \to \mathbb P^1$ branched at one point with ramification break $d$ and $a$-number equal to $L(d)$.
\end{theorem}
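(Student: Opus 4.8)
The plan is to reduce Theorem~\ref{code examples} to a finite computation using Proposition~\ref{generalized family}. Every positive integer $d \equiv -1 \pmod p$ satisfies $d \equiv jp - 1 \pmod{p^2}$ for a unique $j \in \{1, 2, \ldots, p\}$, and the least positive integer in that congruence class is $jp - 1$ itself (which is $p^2 - 1$ when $j = p$). Hence, by Proposition~\ref{generalized family}, it is enough to exhibit for each odd prime $p \leq 23$ and each $j \in \{1, \ldots, p\}$ a single $\Zp$-Galois cover $X_{p,j} \to \mathbb P^1$ branched at one point with ramification break $jp - 1$ and $a$-number equal to $L(jp-1)$; Proposition~\ref{generalized family} then upgrades $X_{p,j}$ to covers with ramification break $d$ and $a$-number $L(d)$ for every $d \equiv jp - 1 \pmod{p^2}$ with $d \geq jp-1$, and these exhaust all positive $d \equiv -1 \pmod p$.

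Two of the $p$ base cases require no new work. For $j = 1$ the ramification break is $p - 1$, and Proposition~\ref{farnellresult} gives a cover $y^p - y = f(x)$ with $\deg f = p-1$ and $a$-number $L(p-1)$. For $j = p$ the ramification break is $p^2 - 1$, and Proposition~\ref{bctheorem} gives the cover $y^p - y = -x^{p^2-1} - x^{(p^2-1)/2}$ with $a$-number $L(p^2-1)$. This leaves the cases $j \in \{2, \ldots, p-1\}$ for $p \in \{3,5,7,11,13,17,19,23\}$, a finite list of $\sum_{p \leq 23}(p-2) = 82$ pairs $(p,j)$.

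For each remaining pair I would exhibit an explicit equation $y^p - y = f(x)$ with $f \in \mathbb F_p[x]$ of degree $jp - 1$; since $p \nmid jp - 1$ this defines a $\Zp$-cover of $\mathbb P^1$ branched only at infinity with ramification break $jp - 1$. One then verifies $a(X_{p,j}) = L(jp - 1)$ by direct computation: write each element $y^m x^n dx$ of the basis $\mathcal B_X$ from \eqref{basis} as $(y^p - f)^m x^n dx$, expand and apply the Cartier rules \eqref{cartier properties} to obtain the matrix of $\mathcal C_X$ on $\mathcal B_X$, and compare its nullity with the value of $L(jp-1)$ supplied by Lemma~\ref{simplified formula}. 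Because $a_Y \geq L(d)$ always holds by \eqref{initiallb}, the computed nullity equalling $L(jp - 1)$ certifies equality. The MAGMA \cite{MAGMA} scripts performing these checks, along with the chosen polynomials, are available at \cite{github} and attached to the arXiv submission.

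The only substantive obstacle is that Proposition~\ref{generalized family} does not guarantee a priori that a base curve with minimal $a$-number exists in each residue class---one must actually find one. The experimental phenomenon noted in the introduction, namely that a random cover over $\mathbb F_p$ with prescribed ramification break typically attains $a$-number $L(d)$, makes the search for each $(p,j)$ straightforward in practice, and it succeeds for all odd primes $p \leq 23$; the barrier to pushing the theorem to larger $p$ is only the growth in the size of the Cartier matrix and hence the cost of verification, not any genuine theoretical difficulty.
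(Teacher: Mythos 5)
Your proposal is correct and takes essentially the same route as the paper: reduce each residue class $d \equiv kp-1 \pmod{p^2}$ to its minimal representative $kp-1$, exhibit an explicit polynomial of degree $kp-1$ whose cover attains $a$-number $L(kp-1)$ by direct MAGMA computation of the Cartier matrix, and then apply Proposition~\ref{generalized family}. Your observation that the cases $j=1$ and $j=p$ already follow from Propositions~\ref{farnellresult} and~\ref{bctheorem} is a minor refinement the paper does not bother to make, but the substance is identical.
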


\begin{proof}
    Let $d$ be a positive integer with $d \equiv -1 \pmod p$. Let $1 \leq k \leq p$ such that $kp-1 \equiv d \pmod {p^2}$. By explicit computation, we found a polynomial $f$ with degree $kp-1$ such that the cover defined by $y^p-y=f(x)$ has $a$-number equal to $L(kp-1)$. A list of the polynomials and the MAGMA code can be found at \cite{github} or attached to the arXiv submission. By Proposition \ref{generalized family}, since $d>kp-1$ and $d\equiv kp-1 \pmod {p^2}$, there exists $\Zp$-Galois cover $X \to \mathbb P^1$ with ramification break $d$ and $a$-number equal to $L(d)$.
\end{proof}

\bibliographystyle{halpha-abbrv}
\bibliography{citations}

\end{document}